\theoremstyle{plain}
\newtheorem*{prop}{Proposition}
\newtheorem{thm}{Theorem}
\newtheorem*{theom}{Theorem}
\newtheorem*{lem}{Lemma}
\newtheorem*{cor}{Corollary}
\theoremstyle{definition}
\newtheorem*{example}{Example}
\theoremstyle{remark}
\newtheorem{rem}{Remark}
\newcommand{\lie}[1]{\mathfrak{#1}}
\newcommand\br{\mathbb R}
\newcommand\bc{\mathbb C}
\newcommand\bz{\mathbb Z}
\newcommand\bn{\mathbb N}
\newcounter{cnt}
\def\mydggeometry{\makeatletter\dg@YGRID=1\dg@XGRID=20\unitlength=0.003pt\makeatother}
\makeatother \theoremstyle{remark}
\numberwithin{equation}{section}
\def\section{\def\@secnumfont{\mdseries}\@startsection{section}{1}%
	\z@{.7\linespacing\@plus\linespacing}{.5\linespacing}%
	{\normalfont\scshape\centering}}
\def\subsection{\def\@secnumfont{\bfseries}\@startsection{subsection}{2}%
	{\parindent}{.5\linespacing\@plus.7\linespacing}{-.5em}%
	{\normalfont\bfseries}}
\title[Polytopes and isomorphisms of KR crystals]{On the Polytope Model and Near End node Isomorphisms of Type $A$ Kirillov--Reshetikhin Crystals}
\author{Dipnit Biswas}
\address{Indian Institute of Technology Madras, Chennai, India}
\email{ma25r011@smail.iitm.ac.in}
\author{Irfan Habib}\address{The Institute of Mathematical Sciences, A CI Of Homi Bhabha National Institute, Chennai 600113, India}
\email{irfanhabib@imsc.res.in}
\thanks{}
\begin{document}
\begin{abstract}
	We prove an inductive formula to construct a path from the highest weight element to any given vertex in the crystal graph of the polytope realization of the Kirillov–-Reshetikhin crystal $KR^{i,m}$ of type $A$. For $i \leq 2$ or $i \geq n-1$, we provide explicit formulas of the same by only using the lowering crystal operators and in those cases, using these paths, we determine the explicit image of any element under the affine crystal isomorphisms between the polytope and the tableau realizations of the Kirillov–-Reshetikhin crystals.
\end{abstract}
\maketitle
\section{Introduction}
Let $\lie g$ be a finite-dimensional simple Lie algebra. Let $\hat{\lie g}$ be the associated untwisted affine Lie algebra and $U_q'(\hat{\lie g})$ be the quantum affine algebra without the degree derivation. Finite-dimensional simple modules for $U_q'(\hat{\lie g})$ have been classified in \cite{CP95QAA} in terms of their Drinfeld polynomials. The class of the prime simple objects in the category $\mathcal{C}_{\hat{\lie{g}}}$ of the finite-dimensional representations of $U_q'(\hat{\lie g})$ is still unknown. Kirillov-Reshetikhin (KR in short) modules $KR(m\omega_i)$ are prime simple objects in $\mathcal{C}_{\hat{\lie{g}}}$, indexed by a node $i$ of the finite Dynkin diagram and a positive integer $m$. In \cite{CP91CMP} Chari and Pressley proved that for ${\lie g}={\lie{sl}_2}$, then the Kirillov-Reshetikhin modules exhaust all prime simple objects in $\mathcal{C}_{\hat{\lie {g}}}$. KR modules have attracted significant attention due to their rich structure and interesting properties such as $T$-systems, fermionic character formulas; see \cite{Chari01Fermionic,Hernandez06KRconj,Hernandez10KRconj} and the references therein for more details.

One of the ways to understand the modules in the category $\mathcal{C}_{\hat{\lie g}}$ is to study them via their crystal bases, if they exist. In \cite{Hata02Paths} it was conjectured that the KR modules admit crystal bases in the sense of \cite{Kash91Oncrystal} and the conjecture was proved in \cite{Kang92Perfect} for $A_n^{(1)}$ and in \cite{Okado08Existence} for non-exceptional types. When a crystal base for $KR(m\omega_i)$ exists, we call the associated crystal the KR crystal $KR^{i,m}$. There have been researches where these crystals have been realized using explicit combinatorial models. If $\lie g$ is of type $A_n$, then $KR^{i,m}$ can be realized as the semistandard Young tableau of rectangular shape $(m^i)$ with filling from $1,2,\dots,n+1$ and the crystal operators are defined as in \cite{Kashiwara94CrystalGraph,Shimozono02AffineTypeA}. The explicit combinatorial models for all non-exceptional types are given in \cite{Fourier09AdvKirillov}.

In \cite{Feigin11PBW}, when $\lie g$ is of type $A_n,$ the authors constructed a polytope for each dominant integral weight $\lambda$ and showed that the basis elements are parametrized by the integral points of the polytope.  Kus in \cite{Kus13RealizationJCTA} defined an affine crystal structure on the integral points of the polytope when $\lambda=m\omega_i$ and proved that the resulting crystal, denoted  by him as $B^{i,m}$, is isomorphic to the affine KR crystal $KR^{i,m}$. Later, in \cite{Kus16KirillovJACO}, the same author provided a simplified formula for the action of the affine node. Similar polytopes are known for the other types as well e.g. see \cite{Feigin11PBWC} for type $C$ and \cite{Back19PBW} for type $B$.

This article is a step towards understanding the explicit isomorphism between the two combinatorial models for KR crystals, namely the tableau model in \cite{Kashiwara94CrystalGraph,Shimozono02AffineTypeA} and the polytope model given in \cite{Kus13RealizationJCTA}. In the polytope model, we provide an inductive formula to obtain an arbitrary element from the highest weight element, which is non-canonical in the sense that it involves the raising crystal operators. To describe the formula, for $A\in B^{i,m}$, let $r^A$ be the row index of the first non-zero row in $A$ and $t^A$ be the least column index of the first non-zero entry in row $r^A$ in $A.$ Now let $A\in B^{i,m}$ be arbitrary. Define $A^{(k)}=(a^{(k)}_{s,t})\in B^{i,m}$ for $r^A\le k \le n$ by setting all the rows of $A$ with row indices $<k$ to zero (see section \ref{secpoly} for more details on the convention of the row-column indexing and interpreting the formula below). Note that $A^{(r^A)}=A.$ Define for $r^A\le k\le n$, the operator $\mathcal{K}^{A^{(k)}}$ by $$\mathcal{K}^{A^{(k)}}:=\left(\prod_{j = i + 1}^{k} f_{k+i+1-j}^{\varphi_{k+i+1-j}}\right) \left(\prod_{j=t^{A^{(k)}}+1}^{i}e_{i+t^{A^{(k)}}+1-j}^{h(k)}\right) \left(f_{t}^{g(k)}\prod_{j = t^{A^{(k)}}+1}^if_j^{\varphi_j}\right),$$
where
$$h(k) = \epsilon_{\substack{i + t^{A^{(k)}} + 1 - j}} - a^{(k)}_{\substack{i + t^{A^{(k)}} + 1 - j},\,k}, \ \ g(k) = a^{(k)}_{\substack{t^{A^{(k)}}},\,k}.$$
\begin{theom}
    With all the notations as above, the operator $$\mathcal{K}^{A^{(r^A)}}\mathcal{K}^{A^{(r^A+1)}}\cdots \mathcal{K}^{A^{(n)}}$$ maps the highest weight element to an arbitrary $A$ in $B^{i,m}.$\qed
\end{theom}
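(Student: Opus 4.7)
The plan is to proceed by downward induction on the row index $k$, establishing the stronger claim that for each $r^A \le k \le n$, the partial composition $\mathcal{K}^{A^{(k)}}\mathcal{K}^{A^{(k+1)}}\cdots \mathcal{K}^{A^{(n)}}$ applied to the highest weight element of $B^{i,m}$ produces precisely $A^{(k)}$. This is the natural statement to induct on because $A^{(k)}$ differs from $A^{(k+1)}$ only in the entries of row $k$, so each operator $\mathcal{K}^{A^{(k)}}$ has the job of installing row $k$ without disturbing the rows already built. The case $k = r^A$ recovers the theorem since $A^{(r^A)} = A$.

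For the base case $k = n$, the matrix $A^{(n)}$ is supported only on row $n$. Using the explicit polytope formulas for $\varphi_j$ and $\epsilon_j$ recalled in Section \ref{secpoly}, I would compute the action of $\mathcal{K}^{A^{(n)}}$ on the highest weight element directly; since there is no prior row data to contend with, each $\varphi_j$ appearing inside $\mathcal{K}^{A^{(n)}}$ is immediately readable from the single-row target, and the $e$-block reduces to a transparent correction on that configuration.

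For the inductive step, assume $\mathcal{K}^{A^{(k+1)}}\cdots \mathcal{K}^{A^{(n)}}$ already produces $A^{(k+1)}$, and split $\mathcal{K}^{A^{(k)}}$ into its three blocks, analyzed right-to-left. The inner block $f_{t^{A^{(k)}}}^{g(k)}\prod_{j=t^{A^{(k)}}+1}^i f_j^{\varphi_j}$ lowers along a diagonal strip, inserting mass into row $k$ in columns $\le i$ but deliberately overshooting the prescribed values of $A$. The middle block of raising operators with exponents $h(k) = \epsilon_{i+t^{A^{(k)}}+1-j} - a^{(k)}_{i+t^{A^{(k)}}+1-j,\,k}$ then serves as the correction, calibrated to reduce the overshoot to exactly $a^{(k)}_{\cdot,k}$ in each relevant column. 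Finally, the outer block $\prod_{j=i+1}^k f_{k+i+1-j}^{\varphi_{k+i+1-j}}$ installs the entries of row $k$ sitting in columns $> i$, producing $A^{(k)}$.

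The main technical obstacle is the middle block. Two facts must be verified: first, that after the inner block the values $\epsilon_{i+t^{A^{(k)}}+1-j}$ are at least $h(k)$, so the raising operators can be applied in the stated multiplicities; second, that these raisings leave the rows $> k$, already in their final form by the inductive hypothesis, untouched. Both rest on the explicit polytope formulas for $\epsilon_j$ in terms of matrix entries: the inner $f$-block leaves a controlled overshoot pattern concentrated along a single diagonal of row $k$, and the polytope combinatorics force the subsequent $e$-action to act only along that same diagonal. Once this invariant is established, the analyses of the inner and outer $f$-blocks reduce to routine unwindings of the polytope $f$-action on the targeted diagonal entries, and propagating the invariant \emph{rows $\ge k$ agree with $A$} through each stage of the induction finishes the proof.
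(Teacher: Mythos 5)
Your inductive skeleton is the same as the paper's: build $A$ row by row from the bottom, with $\mathcal{K}^{A^{(k)}}$ responsible for installing row $k$ (this is exactly theorem~\ref{thmpolytope}, with $Q=A^{(k)}$, $R=A^{(k+1)}$), and analyse the three blocks of $\mathcal{K}^{Q}$ from right to left. The gap is in the step you yourself flag as the main obstacle, and the mechanism you propose for it is not what happens in $B^{i,m}$. In the polytope model the inner block does not deposit mass in row $k$: $f_i^{\varphi_i}$ and the subsequent $f_j^{\varphi_j}$, $j<i$, build the prospective row in the \emph{first} row of the pattern (the row indexed $i$), and, crucially, each $f_j^{\varphi_j}$ draws part of its mass out of the already constructed rows $\geq k+1$, shifting entries of those rows from column $j+1$ to column $j$ at the row indices where the step-path sums attain their successive maxima (remark~\ref{remfphi}). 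So after the inner block the rows $\geq k+1$ are \emph{not} intact, and the disturbance is distributed over several rows, not ``concentrated along a single diagonal of row $k$''. Consequently the raising block is not a trimming of an overshoot in row $k$ that ``leaves the rows $>k$ untouched'': the operators $e_j^{\epsilon_j-q_{j,k}}$ must act precisely at the disturbed positions in rows $\geq k+1$, returning exactly the displaced amounts and leaving only the prescribed residue $q_{j,k}$ in the first row; and the outer block does not install entries ``in columns $>i$'' (the pattern has no such columns) but translates the completed first row down through the zero rows to row $k$.

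What is missing to make your two facts true is the quantitative control, and it is not a formal consequence of the formulas for $\epsilon_j$: it requires the defining Dyck-path inequalities of $B^{i,m}$ in Eq.~\eqref{defpoly}. The paper introduces the statistics $S^A(s,r)$, $r^A(s,\cdot)$, $x^A(s,\cdot)$ and proves proposition~\ref{propF}, namely (a) $x^{s}(i)\geq \sum_{j=1}^{s-1}q_{j,k}$ and (b) an identity expressing the tail sums of the $x$'s as sums over sub Dyck paths of $Q$; these are exactly what guarantee that $f_{t^Q}^{q_{t^Q,k}}$ is admissible, that $\epsilon_{j+1}(C(j))\geq q_{j+1,k}$, and that the $e_{j+1}$'s act at the rows $r^{j+2}(\cdot)$ where the displaced mass sits, yielding the explicit form \eqref{form0} and hence the cancellation. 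Your sketch asserts admissibility and non-interference without any argument, and the single-diagonal picture on which the assertion rests is false, so the inductive step is not established; an analogue of proposition~\ref{propF} (or some other use of the polytope inequalities together with bookkeeping of where each $f_j^{\varphi_j}$ moves mass) is indispensable.
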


In the last section, we provide an explicit formula to reach to any element from the highest weight element by only using the lowering crystal operators when $i\le 2$ and $i\geq n-1.$ In those cases, we also provide the explicit image of the affine crystal isomorphism between the polytope and the tableau realizations.

\medskip

\textit{The paper is organized as follows:} In section \ref{prem}, we recall the necessary definitions of crystals, tableau and polytope realization of Kirillov-Reshetikhin crystals and introduce some statistics associated to an integral point of it. In section \ref{indfor}, we prove the inductive formula to obtain any element in the crystal graph of Kirillov–-Reshetikhin crystals from the highest weight element in the polytope realization. In section \ref{lowcase}, we do the same when $i\le 2$ and $i\geq n-1$ and provide the explicit image of the affine crystal isomorphism between the polytope and the tableau realizations.\medskip

This research was greatly facilitated by experiments carried out in the computational algebra systems SageMath \cite{sagemath}.

\section{Preliminaries}\label{prem}
\subsection{} We adopt the conventions that an empty product equals $1$ and an empty sum equals $0$. We denote by $\mathbb{C}, \mathbb{R}, \mathbb{Z}, \mathbb{N}$, and $\mathbb{Z}_{+}$ the sets of complex numbers, real numbers, integers, positive integers and non-negative integers, respectively. Given $m\in \bn,$ we denote the set $\{1,2,\dots,m\}$ by $[m].$ Moreover, if $f$ denotes a condition, we define the indicator function $\delta_{f}$ by $\delta_{f}=1$ if $f$ holds and $\delta_{f}=0$ otherwise. For instance, $\delta_{\,j \geq 2}=1$ if and only if $j \geq 2$, and $\delta_{\,j \geq 2}=0$ otherwise.
\subsection{Lie algebras of type \texorpdfstring{$A_n$}{An}} Throughout this article once and forever we fix $n\in\bn.$ Let $I=\{1,2,\cdots, n\}$ be a finite index set and $A=(a_{k,\ell})_{i,j\in I}$ be the Cartan matrix of type $A_n$. Let $\lie g$ be the Lie algebra associated to the matrix $A.$ Then $\mathfrak{g}$ is the Lie algebra of traceless $(n+1)\times (n+1)$ complex matrices, that is, $\lie g=\mathfrak{sl}_{n+1}(\mathbb{C})$. Denote by $\mathfrak{h}$ the Cartan subalgebra of $\mathfrak{g}$ consisting of diagonal matrices. Let $\Pi = \{\alpha_1, \alpha_2, \dots, \alpha_n\}$ and $\Pi^\vee = \{\alpha_1^\vee, \alpha_2^\vee, \dots, \alpha_n^\vee\}$ denote the sets of simple roots and simple coroots of $\mathfrak{g}$, respectively. Let $\theta$ be the highest root and let $\omega_1, \omega_2, \dots, \omega_n$ be the fundamental weights of $\mathfrak{g}$ . We denote by $P$ the weight lattice of $\mathfrak{g}$.\medskip

Now set $\widehat{I} := I \cup \{0\}$ and let $\widehat{A}$ be the corresponding untwisted affine Cartan matrix of type $A_n^{(1)}$. The Lie algebra $\widehat{\mathfrak{g}}$ associated with $\widehat{A}$ is the untwisted affine Lie algebra $\widehat{\lie{sl}_{n+1}}(\bc)$. Its sets of simple roots and simple coroots are $\widehat{\Pi}=\{\alpha_0,\alpha_1,\dots,\alpha_n\}$ and $\widehat{\Pi^\vee}:=\{\alpha_0^\vee,\alpha_1^\vee,\dots,\alpha_n^\vee\}$ respectively, where $\alpha_0=-\theta+\delta,$ the root $\delta$ being the null root.  We denote by $\widehat{P}$ the weight lattice of $\widehat{\lie g}.$
\subsection{Abstract crystals} A $\widehat{\lie g}$ \textit{crystal} is a non-empty set $\mathcal{B}$ together with maps $$\mathrm{wt}:\mathcal{B}\to \widehat{P},\ \ e_\ell,f_\ell:\mathcal{B}\to \mathcal{B}\cup\{0\},\ \ \epsilon_\ell,\varphi_\ell:\mathcal{B}\to \bz\cup \{-\infty\},\ \ \ell\in\widehat{I}$$ satisfying the following conditions:
    \begin{enumerate}
        \item If $x,y\in \mathcal{B}$, then $e_\ell(x)=y$ if and only if $f_\ell(y)=x$ and in this case we have $$\mathrm{wt}(y)=\mathrm{wt}(x)+\alpha_\ell,\ \ \epsilon_\ell(y)=\epsilon_\ell(x)-1,\ \ \varphi_\ell(y)=\varphi_\ell(x)+1.$$
        \item $\varphi_\ell(x)-\epsilon_\ell(x)=\mathrm{wt}(x)(\alpha_\ell^\vee)$ for $x\in\mathcal{B}$ where it is understood that if $\varphi_\ell(x)=-\infty$ then $\epsilon_\ell(x)=-\infty$ and in that case $e_\ell(x)=f_\ell(x)=0.$
    \end{enumerate}
The operators $f_\ell$ (resp. $e_\ell$), $\ell\in \widehat{I}$, are called the \textit{lowering} (resp. \textit{raising}) operators.  For brevity, we shall sometimes call only $\mathcal{B}$ as a crystal when the weight map and crystal operators are clear from the context. Throughout this article, we shall consider \textit{seminormal crystals} only, i.e. crystals where the functions $\epsilon_\ell,\varphi_\ell$ are determined by:
$$\epsilon_\ell(x):=\max\{k\in\bz_+: e_\ell^k(x)\neq 0\},\ \ \varphi_\ell(x):=\max\{k\in\bz_+: f_\ell^k(x)\neq 0\},$$ with the convention that $\epsilon_\ell(x)=-\infty$ (resp. $\varphi_\ell(x)=-\infty$) if $e_\ell(x)=0$ (resp. $f_\ell(x)=0$) for $x\in\mathcal{B}$. The crystal graph of $\mathcal{B}$ is a directed simple graph with vertex set $\mathcal{B}$ and an edge $x\stackrel{\ell}{\to}y$ whenever $f_\ell(x)=y.$  The dual crystal $\mathcal{B}^\vee$ of $\mathcal{B}$ is defined by a bijection $x \mapsto x^\vee$ between $\mathcal{B}$ and $\mathcal{B}^\vee$ such that
$$\mathrm{wt}(x^\vee)=-\mathrm{wt}(x),\ \ e_\ell(x^\vee)=f_\ell(x)^\vee,\ \ f_\ell(x^\vee)=e_\ell(x).$$
Note that the crystal graph of $\mathcal{B}^\vee$ is obtained by reversing the direction of all the arrows in the crystal graph of $\mathcal{B}.$ 

\subsection{KR crystals as tableaux}\label{seccrytab} The tableaux model for the Kirillov-Reshetikhin crystals (KR crystals in short) is standard and can be found in \cite{Kashiwara94CrystalGraph}. Let $(i,m)\in I\times \bn$. The KR crystal associated to the pair $(i,m)$ is denoted by $\mathrm{SSYT}(m\omega_i)$ and is described as follows: 
\begin{itemize}
    \item The underlying set $\mathcal{B}$ is $\mathrm{SSYT}(m\omega_i)$, consisting of all semistandard Young tableaux (SSYT in short) of rectangular shape $(m^i)$ filled with entries from $[n+1]$.
    \item For $j\in I$, the crystal operators $e_j,f_j$ are defined via the signature rule:

    \noindent Given $T\in\mathcal{B}$, let $w$ be the word obtained by reading $T$ from left to right along rows, starting at the bottom row and moving upward. Thus $T$ may be regarded as a word over the alphabet $[n+1]$. Now fix $j\in I$. In $w$, successively pair each $j+1$ with the nearest unpaired $j$ to its right, provided no letters between them are equal to $j$ or $j+1$. After all possible pairings, any unpaired $j$'s lie to the left of all unpaired $j+1$'s. Then:
\begin{itemize}
    \item If no unpaired $j$ (resp. $j+1$) remains, set $f_j(w)=0$ (resp. $e_j(w)=0$).  
    \item Otherwise, $f_j$ changes the rightmost unpaired $j$ to $j+1$, and $e_j$ changes the leftmost unpaired $j+1$ to $j$.  
\end{itemize}  
 For example, 
$$f_1(21122152731221)=21222152731221,\ \ e_1(21122152731221)=21112152731221.$$
    \item The action of the crystal operators for the affine nodes are given by (\cite{Shimozono02AffineTypeA}):
\begin{equation}\label{epphi0tab}
    e_0 = \mathrm{pr}^{-1}\circ e_1\circ \mathrm{pr},\ \ \ \ f_0 = \mathrm{pr}^{-1} \circ f_1 \circ \mathrm{pr}.
\end{equation}
where $\mathrm{pr}$ is the Sch\"{u}tzenberger’s promotion operator. The image of the promotion (resp. inverse promotion) operator applied to a tableau $T$ is computed as follows:
\begin{enumerate}
    \item Remove all entries equal to $n+1$ (resp. $1$) from $T$.
    \item Apply jeu-de-taquin to slides the empty boxes to a corner.
    \item Fill the empty boxes with $0$ (resp. $n+2$).
    \item Increase (resp. decrease) all remaining entries by $1$.
\end{enumerate}
For more details about the jeu-de-taquin slides, the reader may refer to \cite[Section A1.2]{StanleyECVol2}.
\end{itemize}
We denote by $T^{(0)}\in \mathrm{SSYT}(m\omega_i)$ (when $i$ and $m$ are clear from the context) the highest weight tableau i.e. the unique element of $\mathrm{SSYT}(m\omega_i)$ satisfying $e_i(T^{(0)})=0$ for all $i\in[n].$ Then all the entries of $T^{(0)}$ in the $j$-th row are $j$ for $j\in [i].$ 

\subsection{KR crystals as polytopes}\label{secpoly}
The polytope models for the KR crystals were introduced in \cite{Kus13RealizationJCTA} and \cite{Kus16KirillovJACO}. This model was motivated by the polytope realization of the FFL basis defined in \cite{Feigin11PBW}. Let $i\in I.$ Let $\widetilde{B}^{i}_n$ be the set of all patterns of size  with non-negative integer coefficients with the following indexing of rows and columns:
\begin{equation}\label{defA}
    A=\begin{array}{|c|c|c|c|c|}\hline
   a_{1,i}  & a_{2,i} & \cdots& a_{i-1,i}  & a_{i,i} \\ \hline
    a_{1,i+1}  & a_{2,i+1} & \cdots &a_{i-1,i+1}  & a_{i,i+1} \\ \hline
    \vdots & \vdots & \vdots & \vdots &\vdots \\\hline
    a_{1,n}  & a_{2,n} & \cdots& a_{i-1,n}  & a_{i,n} \\\hline
\end{array}
\end{equation}
A \textit{Dyck path} in $\widetilde{B}^{i}_n$ is a sequence $(\beta(1),\beta(2),\dots,\beta(n))$ such that $\beta(1)=(1,i),\beta(n)=(i,n)$ and if $\beta(r)=(a,b),$ then $\beta(r+1)\in \{(a,b+1),(a+1,b)\}.$ A subsequence of a Dyck path is called a \textit{sub Dyck path}. For $m\in \bn,$ define $B^{i,m}_n$ to be the subset of $\tilde{B}^{i}_n$ given by 
\begin{equation}\label{defpoly}
    B^{i,m}_n:=\left\{A\in \widetilde{B}^{i,m}_n: \sum_{r=1}^n a_{\beta(r)}\le m\ \ \text{ for all Dyck paths }\beta\text{ in }\widetilde{B}^{i,m}_n\right\}.
\end{equation}
We shall often suppress $n$ from $B^{i,m}_n$ and write $B^{i,m}$ instead when $n$ is clear from the context. The set $B^{i,m}$ is indeed a polytope in $\br_+^{i(n-i+1)}$ (see \cite{Kus13RealizationJCTA}).
Let $A\in B^{i,m}$ be given by \eqref{defA}. Define the weight function $\mathrm{wt}:B^{i,m}\to P$ as 
\begin{equation*}\label{weight}
\mathrm{wt}(A)=m\omega_i-\sum_{1\leq p\leq i, i\leq q\leq n}a_{p,q}\alpha_{p,q}.
\end{equation*}
It is often convenient to set $a_{k,\ell}=0$ if $k\notin[i]$ or $\ell\notin \{i,i+1,\dots,n\}.$ We now define the crystal operators following \cite{Kus13RealizationJCTA}. For $\ell\in I$, set:
\begin{equation*}\label{1}
p^\ell_+(A)=\min \left\{1\leq p\leq i|\sum^p_{j=1}a_{j,\ell-1}+\sum^{i}_{j=p}a_{j,\ell}=\max_{1\leq q\leq i}\{\sum^q_{j=1}a_{j,\ell-1}+\sum^{i}_{j=q}a_{j,\ell}\}\right\}
\end{equation*}
\begin{equation*}\label{2}
q^\ell_+(A)=\max\{1\leq p\leq i|\sum^p_{j=1}a_{j,\ell-1}+\sum^{i}_{j=p}a_{j,\ell}=\max_{1\leq q\leq i}\{\sum^q_{j=1}a_{j,\ell-1}+\sum^{i}_{j=q}a_{j,\ell}\}\}
\end{equation*}
\begin{equation*}\label{3}
p^\ell_-(A)=\max\{i\leq p\leq n|\sum^p_{j=i}a_{\ell,j}+\sum^{n}_{j=p}a_{\ell+1,j}=\max_{i\leq q\leq n}\{\sum^q_{j=i}a_{\ell,j}+\sum^{n}_{j=q}a_{\ell+1,j}\}\}
\end{equation*}
\begin{equation*}\label{4}
q^\ell_-(A)=\min\{i\leq p\leq n|\sum^p_{j=i}a_{\ell,j}+\sum^{n}_{j=p}a_{\ell+1,j}=\max_{i\leq q\leq n}\{\sum^q_{j=i}a_{\ell,j}+\sum^{n}_{j=q}a_{\ell+1,j}\}\}.
\end{equation*}

For $\ell\in I,$ the functions $\varphi_\ell,\epsilon_\ell: B^{i,m}\to \bz_{\geq 0}$ are defined by 

\begin{equation*}\label{phfi}
\varphi_l(A)=\begin{cases}
  m-\sum^{i-1}_{j=1} a_{j,i}-\sum^{n}_{j=i} a_{i,j},  & \text{if $l=i$}\\
  \sum^{p^l_+(A)}_{j=1}a_{j,l-1}-\sum^{p^l_+(A)-1}_{j=1}a_{j,l}, & \text{if $l>i$}\\
  \sum^{n}_{j=p^l_{-}(A)}a_{l+1,j}-\sum^{n}_{j=p^l_{-}(A)+1}a_{l,j}, & \text{if $l<i$}
\end{cases}
\end{equation*}
and
\begin{equation*}\label{epfin}
\epsilon_l(A)=\begin{cases}
  a_{i,i},  & \text{if $l=i$}\\
  \sum^{i}_{j=q^l_+(A)}a_{j,l}-\sum^{i}_{j=q^l_{+}(A)+1}a_{j,l-1}, & \text{if $l>i$}\\
  \sum^{q^l_{-}(A)}_{j=i}a_{l,j}-\sum^{q^l_{-}(A)-1}_{j=i}a_{l+1,j}, & \text{if $l<i$,}
\end{cases}
\end{equation*}

Now we can define the crystal operators. The images ${f}_{\ell}(A)$ and  ${e}_{\ell}(A)$ are defined to be $0$ if $\varphi_{l}(A)=0$ and $\epsilon_{l}(A)=0$ respectively. Otherwise, the image of $A$ under ${f}_{\ell}(A)$ and  ${e}_{\ell}(A)$ can be obtained from $A$ by the following rule:
\begin{equation*}\label{kashf}
{f}_{l}A=\left\{
\begin{array}{ll}
\text{ replace } a_{i,i} \text{ by } a_{i,i}+1, & \text{if } l=i,\\
\text{replace }a_{p^l_+(A),l-1} \text{ by } 
a_{p^l_+(A),l-1}-1 \text{ and } 
a_{p^l_+(A),l} \text{ by }
a_{p^l_+(A),l}+1, & \text{if } l>i,\\
\text{replace }a_{l,p^l_-(A)}\text{ by } a_{l,p^l_-(A)}+1 \text{ and } 
 a_{l+1,p^l_-(A)}\text{ by } a_{l+1,p^l_-(A)}-1,& \text{ if } l<i.\\
\end{array}\right.
\end{equation*}

\begin{equation*}\label{kashe}
{e}_{l}A=\left\{
\begin{array}{ll}
\text{replace }
a_{i,i}\text{ by } a_{i,i}-1,& \text{if } l=i,\\
\text{replace } a_{q^l_+(A),l-1} \text{ by } a_{q^l_+(A),l-1}+1 \text{ and }
a_{q^l_+(A),l} \text{ by } a_{q^l_+(A),l}-1,& \text{ if } l>i,\\
\text{replace } a_{l,q^l_-(A)} \text{ by } a_{l,q^l_-(A)}-1 \text{ and }
a_{l+1,q^l_-(A)} \text{ by } a_{l+1,q^l_-(A)}+1,& \text{ if } l<i.\\
\end{array}\right.
\end{equation*}
We describe the crystal operators and the functions $\epsilon_\ell$, $\varphi_\ell$ using the diagrams in Figures~\ref{figflei} and~\ref{figflgri}, as well as Figures~\ref{figelei} and~\ref{figelgri}. These visual descriptions will be used throughout the article. We shall only refer to the relevant figure to describe the crystal operators and the functions $\varphi_\ell$ and $\epsilon_\ell.$ We shall only explain Figure~\ref{figflei}, which describes the action of the operator $f_\ell$ for $\ell < i$ on a $A\in B^{i,m}$. The remaining cases are analogous.

We consider columns $\ell$ and $\ell+1$ of $A$ and look at the paths of the form described in the figure (formally defined as step paths in the next subsection) restricted to these two columns, where the sum of the entries along the path is maximal. Let $p_{-}^\ell(A)$ denote the largest row index at which this maximal sum occurs; the fact that we choose the maximum row index is indicated in the figure by a downward arrow ($\downarrow$). The value $\varphi_\ell(A)$ is computed as the difference between two partial column sums: the sum of the entries in column $\ell+1$ starting at row $p_{-}^\ell(A)$, minus the sum of the entries in column $\ell$ starting at row $p_{-}^\ell(A)+1$. In the diagram, these correspond to the blue segment ({\color{blue}$|$}) and the red segment ({\color{red}$|$}), respectively.
Finally, if $\varphi_\ell(A) > 0$, then the operator $f_\ell$ acts by increasing the entry at position $(\ell,p_{-}^\ell(A))$ by $1$ and decreasing the entry at position $(\ell+1,p_{-}^\ell(A))$ by $1$.
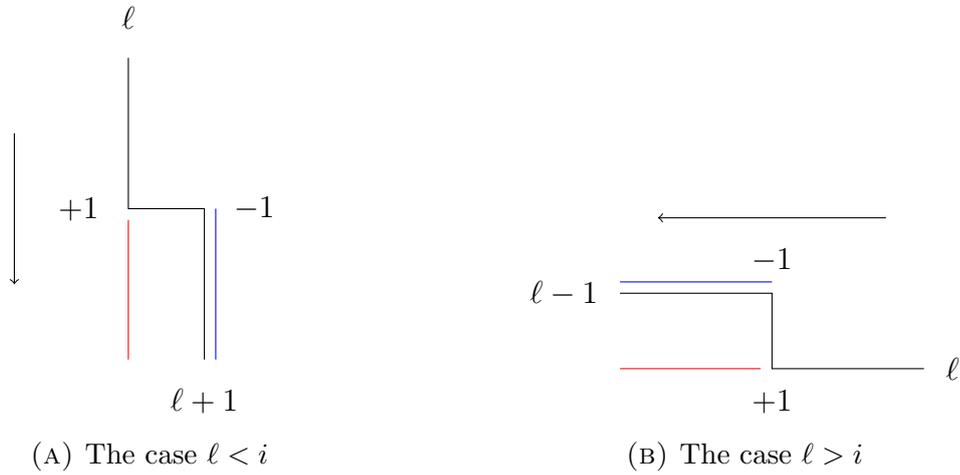
\begin{figure}[ht]
 \begin{subfigure}{0.49\textwidth}
    \centering
    \begin{tikzpicture}\label{fle}
    \draw[] (0,2) -- (0,0) -- (1,0) -- (1,-2);
    \node[]  at (0,2.1) [label=above:$\ell$]{};
    \node[]  at (1,-2.1) [label=below:$\ell+1$]{};
    \node[] at (-0.1,0) [label=left:$+1$]{};
    \node[] at (1.1,0) [label=right:$-1$]{};
    \draw[->] (-1.5,1)--(-1.5,-1);
    \draw[red] (0,-0.15)--(0,-2);
    \draw[blue] (1.15,0)--(1.15,-2);
\end{tikzpicture}
    \caption{The case $\ell<i$}
    \label{figflei}
    \end{subfigure}
    \begin{subfigure}{0.49\textwidth}
\centering
\begin{tikzpicture}\label{fge}
    \draw[] (-2,0) -- (0,0) -- (0,-1) -- (2,-1);
    \node[]  at (-2,0) [label=left:$\ell-1$]{};
    \node[]  at (2,-1) [label=right:$\ell$]{};
    \node[] at (0,0) [label=above:$-1$]{};
    \node[] at (0,-1) [label=below:$+1$]{};
    \draw[blue] (-2,0.15)--(0,0.15);
    \draw[red] (-2,-1)--(-0.15,-1);
    \draw[->] (1.5,1)--(-1.5,1);
\end{tikzpicture}
    \caption{The case $\ell>i$}
    \label{figflgri}
    \end{subfigure}
    \caption{Action of $f_\ell$ operators}
\end{figure}

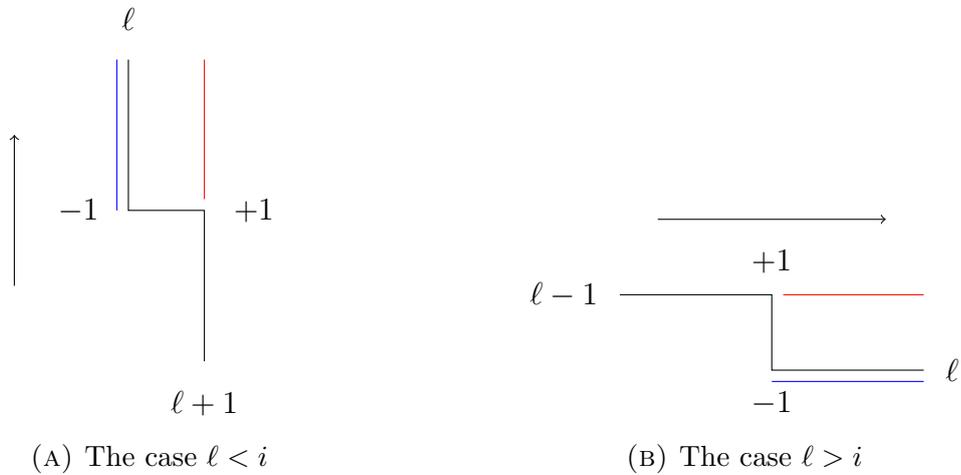
\begin{figure}[ht]
 \begin{subfigure}{0.49\textwidth}

    \centering
    \begin{tikzpicture}\label{ele}
    \draw[] (0,2) -- (0,0) -- (1,0) -- (1,-2);
    \node[]  at (0,2.1) [label=above:$\ell$]{};
    \node[]  at (1,-2.1) [label=below:$\ell+1$]{};
    \node[] at (-0.1,0) [label=left:$-1$]{};
    \node[] at (1.1,0) [label=right:$+1$]{};
    \draw[->] (-1.5,-1)--(-1.5,1);
    \draw[blue] (-0.15,0)--(-0.15,2);
    \draw[red] (1,0.15)--(1,2);
\end{tikzpicture}
    \caption{The case $\ell<i$}
    \label{figelei}
        \end{subfigure}
    \begin{subfigure}{0.49\textwidth}

\centering
\begin{tikzpicture}\label{ege}
    \draw[] (-2,0) -- (0,0) -- (0,-1) -- (2,-1);
    \node[]  at (-2,0) [label=left:$\ell-1$]{};
    \node[]  at (2,-1) [label=right:$\ell$]{};
    \node[] at (0,0) [label=above:$+1$]{};
    \node[] at (0,-1) [label=below:$-1$]{};
    \draw[blue] (0,-1.15)--(2,-1.15);
    \draw[red] (0.15,0)--(2,0);
    \draw[->] (-1.5,1)--(1.5,1);
\end{tikzpicture}
    \caption{The case $\ell>i$}
    \label{figelgri}
    \end{subfigure}
    \label{figel}
    \caption{Action of $e_\ell$ operators}
\end{figure}
We denote by $A^{(0)}\in B^{i,m}$ (when $i$ and $m$ are clear from the context) the highest weight element i.e. the unique element of $B^{i,m}$ satisfying $e_i(A^{(0)})=0$ for all $i=1,2,\dots,n.$ Then all the entries of $A^{(0)}$ are zero. Now define the crystal operators $e_0$ and $f_0$ by (\cite{Kus16KirillovJACO}):
\begin{equation}\label{epphi0}
    \varphi_0(A)=a_{1,n},\ \ \epsilon_0(A)= m - \sum_{j=i}^n a_{1,j}-\sum_{j=2}^i a_{j,n}.
\end{equation}
Set $f_0(A)=0$ (resp. $e_0(A)=0$) if $\varphi_0(A)=0$ (resp. $\epsilon_0(A)=0$). Otherwise $f_0(A)$ (resp. $e_0(A)$) is obtained from $A$ by replacing $a_{1,n}$ by $a_{1,n}-1$ (resp. $a_{1,n}+1$). The following theorem was proved in \cite{Kus13RealizationJCTA,Kus16KirillovJACO}.
\begin{thm}
    The set $B^{i,m}$ with the crystal operators defined above is isomorphic to the Kirillov Reshetikhin crystal.
\end{thm}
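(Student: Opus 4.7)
The plan is a downward induction on $k \in \{r^A, \ldots, n\}$ to establish the stronger statement
\begin{equation*}
\mathcal{K}^{A^{(k)}}\mathcal{K}^{A^{(k+1)}}\cdots\mathcal{K}^{A^{(n)}}\bigl(A^{(0)}\bigr) = A^{(k)},
\end{equation*}
so that setting $k = r^A$ and using $A^{(r^A)} = A$ yields the theorem. With the convention $A^{(n+1)} := A^{(0)}$ (the all-zero pattern), the base case $k = n+1$ is vacuous, so the whole content is the inductive step: assuming the claim for $k+1$, one must show that $\mathcal{K}^{A^{(k)}}\bigl(A^{(k+1)}\bigr) = A^{(k)}$. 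Geometrically, $\mathcal{K}^{A^{(k)}}$ must deposit exactly the entries of row $k$ of $A$ into the pattern without altering rows $>k$, which already carry $A$.

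Writing $t := t^{A^{(k)}}$, I would analyze the three blocks of $\mathcal{K}^{A^{(k)}}$ right-to-left using Figures~\ref{figflei}--\ref{figelgri}. \emph{Stage 1} (rightmost, $f_{t}^{g(k)}\prod_{j=t+1}^{i}f_{j}^{\varphi_{j}}$) begins by loading $a_{i,i}$ via $f_{i}^{\varphi_{i}}$, then cascades $f_{i-1}^{\varphi_{i-1}},\ldots,f_{t+1}^{\varphi_{t+1}}$, and finally applies $f_{t}^{g(k)}$; a secondary induction on $j$ shows that at each step the optimizer $p_-^{\,j}$ equals $i$, so every $f_j$ acts within row $i$ of the pattern and installs there a specific profile. \emph{Stage 2} (middle, $\prod_{j=t+1}^{i}e_{i+t+1-j}^{h(k)}$) then fine-tunes this row-$i$ profile so that column $c$ holds exactly $a^{(k)}_{c,k}$ units for $c \in \{t,t+1,\ldots,i\}$; the explicit count $h(k) = \epsilon_{i+t+1-j} - a^{(k)}_{i+t+1-j,k}$ is designed precisely to trim the overshoot of Stage 1. \emph{Stage 3} (leftmost, $\prod_{j=i+1}^{k}f_{k+i+1-j}^{\varphi_{k+i+1-j}}$) cascades $f_{i+1}^{\varphi_{i+1}},\ldots,f_{k}^{\varphi_{k}}$, which by Figure~\ref{figflgri} transports the row-$i$ profile downward one row at a time. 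Using that rows $k+1,\ldots,n$ already carry $A$, a further induction on $j$ shows that the optimizer $p_+^{\,j}$ at each step picks out the column currently bearing the mass from above, so the cascade deposits row $k$ of $A$ exactly, leaving rows $>k$ untouched.

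The main obstacle is the state-dependent bookkeeping, since $\varphi_j,\epsilon_j,p_{\pm}^{\,j},q_{\pm}^{\,j}$ are evaluated on intermediate patterns and must be shown to take the anticipated values. The most delicate part is Stage 2: the raising operators $e_j$ for $j<i$ could a priori latch onto non-zero entries in rows $>i$ inherited from $A$, and one must prove that the maxima defining $q_-^{\,j}$ are realized at $p=i$, forcing each $e_j$ to act on row $i$. I would handle this by maintaining throughout $\mathcal{K}^{A^{(k)}}$ an invariant describing the support of the current pattern and the values of the relevant partial sums, verified locally against the definitions of Section~\ref{secpoly}; once this invariant survives all three blocks, the inductive step is complete.
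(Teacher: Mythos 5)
Your proposal does not actually address the statement in question. The theorem you were asked about is the assertion that $B^{i,m}$, with the crystal operators of Section~\ref{secpoly}, is isomorphic to the Kirillov--Reshetikhin crystal $KR^{i,m}$; in the paper this is not proved at all but quoted from \cite{Kus13RealizationJCTA,Kus16KirillovJACO}. What you have sketched instead is a proof of Theorem~\ref{thmpolytope} (the inductive path formula of Section~\ref{indfor}, announced in the introduction): that the composite $\mathcal{K}^{A^{(r^A)}}\mathcal{K}^{A^{(r^A+1)}}\cdots\mathcal{K}^{A^{(n)}}$ sends the highest weight element to an arbitrary $A$. That is a different result, and your three-stage analysis (loading row $i$, trimming with the raising operators, transporting downward) is indeed the shape of the paper's argument for that theorem --- but it is not a proof of the isomorphism statement.

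Beyond the mismatch, the argument you outline cannot yield the isomorphism even if every bookkeeping step were completed. Showing that each integral point of the polytope is reachable from the highest weight element by a word in the operators $e_\ell,f_\ell$ only establishes connectedness of the crystal graph of $B^{i,m}$; it says nothing about how this crystal compares with $KR^{i,m}$. Your proposal never introduces the tableau model (or any other realization of $KR^{i,m}$), never constructs a map intertwining the two structures, and never verifies any structural axioms. A genuine proof must identify the classical ($\ell\in I$) crystal structure on $B^{i,m}$ with that of $B(m\omega_i)$ --- for instance by checking the Stembridge local conditions or exhibiting an explicit weight-preserving bijection compatible with all $e_\ell,f_\ell$ --- and then show that the affine operators $e_0,f_0$ defined via Eq.~\eqref{epphi0} agree with the affine structure of the KR crystal (in the tableau model given through promotion, Eq.~\eqref{epphi0tab}). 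This is precisely the content of Kus's work, and none of it is present or replaceable by the path construction you describe.
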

For the rest of this article, we denote by
\begin{equation}\label{cryisomap}
    \varphi_{i,m}:B^{i,m}\to\mathrm{SSYT(m\omega_i)}
\end{equation}
the affine crystal isomorphism between the polytope and the tableau realizations.\medskip

\noindent {\em Throughout this article, we adopt the following convention:} Let $A$ be either an element of $B^{i,m}$ or a tableau and let $a$ be a non-negative integer. We set $f_j^{\varphi_j - a}(A)$ to be $f_j^{\varphi_j(A) - a}(A),$ when $a\le \varphi_j(A)$ and undefined otherwise. A product of the form $\prod_{j=s}^t f_j^{\varphi_j - a_j}(A)$ is interpreted as the composition $$f_s^{\varphi_s - a_s} f_{s+1}^{\varphi_{s+1} - a_{s+1}} \cdots f_t^{\varphi_t - a_t}(A).$$
The product is understood to be empty whenever $s > t$. A similar convention is adopted for the raising operators as well.
\subsection{Statistics on integral points of the polytope}\label{statpoly} In this subsection, we introduce some statistics associated with the integral points of the FFL polytopes. Let $A=(a_{k,\ell})\in B^{i,m}$ and fix $1\le s< i\le r\le n.$ We define the \textit{step path} $p_{s,r}$ as the sequence of positions:
$$p_{s,r}=\{(s,i),(s,i+1), \dots ,(s,r),(s+1,r),(s+1,r+1),\dots,(s+1,n)\}.$$ We then set
$$S^A(s,r):=\sum_{x\in p_{s,r}} a_x\ \  \text{ and }M^A(s):=\max\{S^A(s,r): i\le r\le n\}.$$ 
Now we define a sequence $R^A(s) = \{r^A(s,0), r^A(s,1), \dots,r^A(s,\ell^A(s))\}$ as follows: Set $r^A(s,0) = i-1$ and define inductively
\begin{equation}\label{defr}
    r^A(s,k+1):=\max\{r>r^A(s,k): S^A(s,r)=\max\{S^A(s,p): r^A(s,k)<p\le n\}\}
\end{equation}
with $\ell^A(s)$ such that $r^A(s,\ell^A(s)) = n$.

Finally, we define some non-negative integers $x^A(s,r)$. Set $x^A(s,r)=0$ if $r\notin R^A(s)\backslash\{i-1\}$ and for $1\le k<\ell^A(s)$ the integers $x^A(s,r)$ satisfy:
\begin{equation}\label{recx}
    S^A(s,r^A(s,k+1))+\sum_{p=i}^{r^A(s,k)} x^A(s,p)=M^A(s),
\end{equation}
and we set $x^A(s,n)=a_{s+1,n}.$ 

For example, let $m$ be large and take $(i,n)=(3,6).$ Let $A\in B^{i,m}$ be given by
$$A=\begin{array}{|c|c|c|}\hline 2 & 0 & 2\\ \hline 0 & 1 & 2 \\ \hline 1 & 2 & 1\\ \hline 1& 0 &5\\ \hline \end{array}$$
Then we have $$S^A(1,4)=5,\ \ S^A(2,5)=9,\ \ S^A(2,3)=10.$$ Moreover,
$$R^A(1)=\{2,5,6\},\ \ \ R^A(2)=\{2,3,5,6\}$$ and $$x^A(2,3)=x^A(2,5)=x^A(2,6)=x^A(1,5)=1,$$ $$x^A(2,4)=x^A(1,3)=x^A(1,4)=x^A(1,6)=0.$$
\begin{rem}\label{xandr}
    By definition, we have $x^A(s,p)\in \bz_+$ for $i\le p\le n.$ Moreover, we have $x^A(s,j)>0$ if and only if $j\in \{r^A(s,p):1\le p< \ell^A(s)+\delta_{(a_{s+1,n}>0)}\}$. In particular, $r^A(s,1)=i$ if  $x^A(s,i)>0.$
\end{rem}
  We begin with an easy lemma which will be needed later. 
\begin{lem}\label{actionrow}
    Let $1\le k<\ell^A(s).$ For any $0\le x< x^A(s,r^A(s,k))$ and $r^A(s,k)<r\le n$, we have $$S^A(s,r)+\sum_{p=i}^{r^A(s,k-1)} x^A(s,p)+x<M^A(s).$$ 
\end{lem}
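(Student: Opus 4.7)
The plan is to reduce the inequality to a direct comparison of the partial sums $S^A(s,r)$ by algebraic manipulation of the defining recurrence \eqref{recx}, and then invoke the maximality that defines $r^A(s,k+1)$.

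First I would observe that the recurrence \eqref{recx} actually extends uniformly to $k = 0$: since $r^A(s,1)$ is defined as the largest $r > i-1$ attaining $\max\{S^A(s,p) : i \le p \le n\} = M^A(s)$, and since the sum $\sum_{p=i}^{r^A(s,0)} x^A(s,p) = \sum_{p=i}^{i-1} x^A(s,p)$ is empty, one has $S^A(s, r^A(s,1)) = M^A(s)$, which is the $k=0$ instance of \eqref{recx}. In particular, the identity
\begin{equation*}
S^A(s,r^A(s,k)) + \sum_{p=i}^{r^A(s,k-1)} x^A(s,p) = M^A(s)
\end{equation*}
holds for all $1 \le k \le \ell^A(s)$.

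Next, I would subtract this identity from \eqref{recx} itself. Using that $x^A(s,p) = 0$ whenever $p \notin R^A(s) \setminus \{i-1\}$ (Remark \ref{xandr}) together with the fact that the only element of $R^A(s)$ in the interval $(r^A(s,k-1), r^A(s,k)]$ is the right endpoint $r^A(s,k)$, the telescoped sum collapses to a single term, yielding the clean formula
\begin{equation*}
x^A(s, r^A(s,k)) = S^A(s, r^A(s,k)) - S^A(s, r^A(s,k+1)).
\end{equation*}

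Finally, since $r > r^A(s,k)$ and $r^A(s,k+1)$ maximizes $S^A(s, \cdot)$ over integers in $(r^A(s,k), n]$, we have $S^A(s,r) \le S^A(s, r^A(s,k+1))$. Combined with $x < x^A(s, r^A(s,k))$, this gives
\begin{equation*}
S^A(s,r) + x < S^A(s, r^A(s,k+1)) + x^A(s, r^A(s,k)) = S^A(s, r^A(s,k)),
\end{equation*}
and rewriting $S^A(s, r^A(s,k))$ using the extended recurrence delivers the claim. There is no real obstacle here; the only point requiring a little care is verifying that $R^A(s)$ contains no indices strictly between consecutive values $r^A(s,k-1)$ and $r^A(s,k)$, which is immediate from the definition \eqref{defr} of $R^A(s)$ as a strictly increasing sequence of selected maxima.
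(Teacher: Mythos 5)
Your proof is correct and follows essentially the same route as the paper: the one-line argument there combines the maximality of $S^A(s,r^A(s,k+1))$ over $r^A(s,k)<r\le n$ coming from \eqref{defr} with the recurrence \eqref{recx}, which is exactly what your computation does. The only difference is that you make explicit the telescoped identity $x^A(s,r^A(s,k))=S^A(s,r^A(s,k))-S^A(s,r^A(s,k+1))$, which the paper records later as \eqref{valx} in the proof of the proposition following the lemma.
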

\begin{proof}
Since by definition we have $S^A(s,r)\le S^A(s,r^A(s,k+1))$ for all $r^A(s,k)<r\le n$, the result follows from Eq.\eqref{recx}.
\end{proof}
The following proposition will be used throughout this section.
\begin{prop}\label{sumx}
    Let $A\in B^{i,m}$ and let $1\le s< i,\ 1\le r\le \ell^A(s).$ Then we have 
    \begin{equation*}
        \sum_{p=r^A(s,r)}^n x^A(s,p)=S^A(s,r^A(s,r))-S^A(s,n)+a_{s+1,n}.
    \end{equation*}
\end{prop}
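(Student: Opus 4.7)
The plan is to first exploit the sparse support of $x^A(s,\cdot)$ to convert the sum into one indexed only by $R^A(s)$, then derive an explicit closed form for $x^A(s, r^A(s,k))$ from the implicit defining relation~\eqref{recx}, and finally recognize the resulting sum as a telescoping series.

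\textbf{Step 1 (Support reduction).} By definition and \Cref{xandr}, $x^A(s,p) = 0$ unless $p \in \{r^A(s,k) : 1 \le k \le \ell^A(s)\}$. Since the $r^A(s,k)$ form a strictly increasing sequence with $r^A(s,\ell^A(s)) = n$, I would rewrite
$$\sum_{p = r^A(s,r)}^{n} x^A(s,p) \;=\; \sum_{k=r}^{\ell^A(s)} x^A(s, r^A(s,k)),$$
and peel off the last term via $x^A(s,n) = a_{s+1,n}$. The boundary case $r = \ell^A(s)$ of the proposition is then immediate, since both sides collapse to $a_{s+1,n}$.

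\textbf{Step 2 (Closed form from the implicit recursion).} For $1 \le r \le \ell^A(s)-1$, the same reduction converts \eqref{recx} into
$$S^A(s, r^A(s,k+1)) + \sum_{j=1}^{k} x^A(s, r^A(s,j)) = M^A(s), \qquad 1 \le k < \ell^A(s).$$
Subtracting consecutive instances of this identity, and initializing at $k=1$ via $S^A(s, r^A(s,1)) = M^A(s)$ (which holds by the very definition of $r^A(s,1)$), yields the explicit formula
$$x^A(s, r^A(s,k)) = S^A(s, r^A(s,k)) - S^A(s, r^A(s,k+1))$$
valid for all $1 \le k \le \ell^A(s)-1$.

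\textbf{Step 3 (Telescoping and conclusion).} Plugging the closed form into the reduced sum produces a telescoping series, yielding $\sum_{k=r}^{\ell^A(s)-1} x^A(s, r^A(s,k)) = S^A(s, r^A(s,r)) - S^A(s, n)$; adding back the $a_{s+1,n}$ peeled off in Step~1 then produces the right-hand side of the proposition. I do not anticipate a real obstacle here: the entire argument is a clean telescoping once Step~1 licenses the identification $\sum_{p=i}^{r^A(s,k)} x^A(s,p) = \sum_{j=1}^{k} x^A(s, r^A(s,j))$ that turns the defining relation into a usable explicit recursion.
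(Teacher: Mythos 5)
Your proof is correct and follows essentially the same route as the paper: you derive the closed form $x^A(s,r^A(s,k)) = S^A(s,r^A(s,k)) - S^A(s,r^A(s,k+1))$ from Eq.~\eqref{recx} (by differencing consecutive instances, which is just a more explicit way of inverting the triangular system the paper writes as $RX=B$) and then telescope, adding back $x^A(s,n)=a_{s+1,n}$. The support reduction and the boundary case $r=\ell^A(s)$ are handled correctly, so no gaps.
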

\begin{proof}
    Eq.\eqref{recx} can be encoded in a matrix equation $RX=B$ where 
$R=(r_{k,\ell})$ is a square matrix of order $\ell^A(s)-1$, $X=(x_1,\dots, x_{\ell^A(s)-1})$ and $B=(b_1,b_2,\dots,b_{\ell^A(s)-1})$ are column vectors so that
$$r_{k,\ell}=\begin{cases}
    1 & \text{ if } k\geq \ell,\\
    0 & \text{ elsewhere }
\end{cases},\ \ \ x_k=x^A(s,r^A(s,k)),\ \ \ b_k=M^A(s)-S^A(s,r^A(s,k+1)).$$

Solving the system by inverting $R$ we obtain:
\begin{equation}\label{valx}
    x^A(s,r^A(s,r))=S^A(s,r^A(s,r))-S^A(s,r^A(s,r+1)),\ \ 1\le r\le \ell^A(s)-1.
\end{equation}
Now the result follows along with the fact that $x^A(s,\ell^A(s))=a_{s+1,n}.$
\end{proof}
\begin{cor}\label{sumphi}
    We have $\sum_{p=1}^{n}x^A(s,p)=\varphi_s(A).$
\end{cor}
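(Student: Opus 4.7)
The plan is to reduce $\sum_{p=1}^{n}x^A(s,p)$ to a single application of Proposition \ref{sumx} with $r=1$ and then match the right-hand side with the piecewise formula defining $\varphi_s(A)$.

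First I would observe that by the vanishing convention, $x^A(s,p) = 0$ whenever $p \notin R^A(s)\setminus\{i-1\}$, and in particular for every index $p < r^A(s,1)$ (this includes all $p < i$). Hence the sum collapses to
\begin{equation*}
\sum_{p=1}^{n}x^A(s,p) \;=\; \sum_{p=r^A(s,1)}^{n} x^A(s,p).
\end{equation*}
Applying Proposition \ref{sumx} with $r=1$ then rewrites this as
\begin{equation*}
\sum_{p=r^A(s,1)}^{n} x^A(s,p) \;=\; S^A(s,r^A(s,1)) - S^A(s,n) + a_{s+1,n}.
\end{equation*}

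Next I would identify $r^A(s,1)$ with $p^s_-(A)$. Unravelling the definition \eqref{defr}, $r^A(s,1)$ is the largest $r \in \{i,\dots,n\}$ at which $S^A(s,r)$ attains the maximum $M^A(s)$, which is exactly the defining property of $p^s_-(A)$ for $s<i$. In particular $S^A(s,r^A(s,1)) = M^A(s)$.

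Finally I would expand $S^A$ explicitly: $S^A(s,r) = \sum_{j=i}^{r} a_{s,j} + \sum_{j=r}^{n} a_{s+1,j}$. Substituting $r = p^s_-(A)$ into the first occurrence and $r = n$ into the second, and cancelling the telescoping $a_{s+1,n}$ term, the right-hand side simplifies to
\begin{equation*}
\sum_{j=p^s_-(A)}^{n} a_{s+1,j} \;-\; \sum_{j=p^s_-(A)+1}^{n} a_{s,j},
\end{equation*}
which is precisely $\varphi_s(A)$ by the piecewise definition in the case $\ell = s < i$. There is no real obstacle in the argument; the only thing requiring care is the bookkeeping to align the two different indexings (the statistic $r^A(s,\cdot)$ versus Kus's $p^s_-(A)$), since the telescoping inside Proposition \ref{sumx} does the main computational work.
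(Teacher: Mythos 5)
Your proof is correct and follows essentially the same route as the paper: apply Proposition \ref{sumx} with $r=1$ (using that $x^A(s,p)=0$ for $p<r^A(s,1)$) and identify $S^A(s,r^A(s,1))-S^A(s,n)+a_{s+1,n}$ with $\varphi_s(A)$ from the definition in Section \ref{secpoly}. You merely spell out the identification $r^A(s,1)=p^s_-(A)$ and the telescoping, which the paper leaves implicit.
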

\begin{proof} 
Proof follows from proposition \ref{sumx} and the definition of $\varphi_i$ in section \ref{secpoly} since $$S^A(s,r^A(s,1))-S^A(s,n)+a_{s+1,n}=\varphi_{s}(A).$$
\end{proof}
\begin{rem}\label{remfphi}
    Let $s<i.$ Using lemma \ref{actionrow} and corollary \ref{sumphi}, we have that $B:=f_s^{\varphi_s}(A)=(b_{k,\ell})$ is given by
    $$b_{k,\ell}=a_{k,\ell}+\delta_{(k=s)}x^A(s,\ell)-\delta_{(k=s+1)}x^A(s,\ell).$$
\end{rem}
\section{An inductive formula for the polytope model}\label{indfor}
In this section we shall prove an inductive formula to obtain an arbitrary element from the highest weight element in the polytope realization. Throughout the section, we fix the following notation: rows (resp. columns) of an element in $B^{i,m}$ are indexed by $\{i,i+1,\dots,n\}$ (resp. $\{1,2,\dots,i\}$).  Let $Q = (q_{s,t}) \in B^{i,m}$ be an arbitrary element and suppose that the $k$-th row is the first non-zero row of $Q$. Let $t^{Q}$ be the minimum column index such that $q_{t^{Q},k}\neq 0.$ Let $R=(r_{s,t})$ be obtained from $Q$ by replacing all entries in the $k$-th row of $Q$ with zero. Therefore, $q_{s,t}=r_{s,t}$ for all $t\geq k+1.$ The main result of this section is the following theorem.
\begin{thm}\label{thmpolytope}
    The function $\mathcal{K}^Q:=HEF$ maps $R$ to $Q$ where 
    $$H=\prod_{j = i + 1}^{k} f_{k+i+1-j}^{\varphi_{k+i+1-j}},\ \ E=\prod_{j=t^Q+1}^{i}e_{i+t^Q+1-j}^{\epsilon_{i+t^Q+1-j}-q_{i+t^Q+1-j,k}},\ \ F=f_{t}^{q_{t^Q,k}}\prod_{j = t^Q+1}^if_j^{\varphi_j}.\qed$$
\end{thm}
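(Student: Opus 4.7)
My plan is to compute $HEF(R)$ in three stages, applying $F$ first, then $E$, then $H$, and to check that the composition returns $Q$. The main computational tool throughout is Remark \ref{remfphi}, which expresses $f_s^{\varphi_s}(A)$ in closed form via the statistics $x^A(s,\cdot)$ from Section \ref{statpoly}; this allows each full-power lowering operator inside $F$ and $H$ to be handled in a single step rather than traced iteration by iteration.

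For $F$, observe that rows of $R$ with index below $k$ are zero, and row $k$ itself has been zeroed, so the rightmost factor $f_i^{\varphi_i}$ simply raises $a_{i,i}$ to $\varphi_i(R)$ while leaving the rest of $R$ intact. I would then process $f_j^{\varphi_j}$ for $j$ descending from $i-1$ to $t^Q+1$, using Remark \ref{remfphi} to describe the movement of mass between columns $j$ and $j+1$ in terms of the $x$-statistics of the current state. An induction on $j$ should show that after this step the state has row $i$ of the shape $(0,\ldots,0,\ast,\tilde q_{j+1},\ldots,\tilde q_i)$, with columns strictly below $j$ in row $i$ still empty. The final factor $f_{t^Q}^{q_{t^Q,k}}$, applied only $q_{t^Q,k}$ times, deposits exactly $q_{t^Q,k}$ at column $t^Q$ of row $i$; in total, $F(R)$ has row $i$ equal to $(0,\ldots,0,q_{t^Q,k},\tilde q_{t^Q+1},\ldots,\tilde q_i)$ with prescribed overshoots $\tilde q_\ell\ge q_{\ell,k}$.

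Next I would analyze $E$ applied to $F(R)$, with the rightmost factor $e_{t^Q+1}^{\epsilon_{t^Q+1}-q_{t^Q+1,k}}$ acting first and $e_i^{\epsilon_i-q_{i,k}}$ acting last. The plan is to develop a raising-operator analogue of Remark \ref{remfphi} in terms of dual step-path statistics; each factor of $E$ is then a one-shot formula transferring exactly the right mass from column $\ell$ to column $\ell+1$ so that the entry at $(\ell,i)$ drops from $\tilde q_\ell$ down to $q_{\ell,k}$. The displaced mass is absorbed into rows below $i$ in a controlled way, and the claim to prove is that after $E$, row $i$ of $EF(R)$ equals $(q_{1,k},\ldots,q_{i,k})$, rows $i+1,\ldots,k$ are identically zero, and rows $k+1,\ldots,n$ agree with the corresponding rows of $Q$.

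Finally, $H=f_k^{\varphi_k}\cdots f_{i+1}^{\varphi_{i+1}}$ is applied rightmost-first. Because rows $i+1,\ldots,k$ of $EF(R)$ vanish, $\varphi_{i+1}$ equals the total mass in row $i$, and $f_{i+1}^{\varphi_{i+1}}$ transports that row one step down without disturbing rows $i+2,\ldots,n$; iterating, $f_{i+j}^{\varphi_{i+j}}$ shifts the row by one more index until $f_k^{\varphi_k}$ seats it in row $k$, yielding $Q$. The main obstacles will be the careful bookkeeping needed to pin down the overshoot values $\tilde q_\ell$ in the analysis of $F$ (an induction on $j$ using the evolving $x$-statistics) and the preparatory work of establishing the dual of Remark \ref{remfphi} used for $E$; the analysis of $H$ is then a relatively routine consequence of the funnel of zeros between rows $i$ and $k$ in $EF(R)$.
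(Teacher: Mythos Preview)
Your three-stage plan and the treatment of $H$ match the paper: once $EF(R)$ has row $i$ equal to $(q_{1,k},\ldots,q_{i,k})$, rows $i{+}1,\ldots,k$ zero, and rows $k{+}1,\ldots,n$ restored, the operators $f_{i+1}^{\varphi_{i+1}},\ldots,f_k^{\varphi_k}$ shift row $i$ down to row $k$ (Lemma~\ref{lemH}). Using Remark~\ref{remfphi} to process the factors $f_j^{\varphi_j}$ inside $F$ is also the paper's tool, and the resulting state is recorded as Eq.~\eqref{polrs}.

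The gap is in your picture of $F(R)$ and consequently in the $E$ step. The full-power lowerings $f_j^{\varphi_j}$ do not act only on row $i$: by Remark~\ref{remfphi} they shift mass in \emph{every} row where the relevant $x$-statistic is positive, so after $F$ the rows $k{+}1,\ldots,n$ are genuinely disturbed (in the paper's second worked example, row $5$ moves from $(1,3,1)$ to $(1,4,0)$). Your ``overshoot'' claim $\tilde q_\ell\ge q_{\ell,k}$ is also false in general---in that same example $\tilde q_3=3<4=q_{3,4}$---so $E$ is not merely trimming row $i$; it must simultaneously \emph{undo} the lower-row disturbances introduced by $F$. A dual of Remark~\ref{remfphi} would describe only the full power $e_\ell^{\epsilon_\ell}$, not the partial powers $e_\ell^{\epsilon_\ell-q_{\ell,k}}$ that $E$ actually uses, so it cannot serve as a one-shot formula here. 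The paper instead proves a closed form \eqref{form0} for the intermediates $C(j)$ by computing the step-path sums $S^{C(j)}(j{+}1,\cdot)$ and showing they are controlled by the $x^{j+2}$-statistics left over from the $F$ stage; this forces the successive applications of $e_{j+1}$ to hit rows $r^{j+2}(\ell^{j+2}),\ldots,r^{j+2}(2),i$ with multiplicities exactly reversing what $f_{j+1}^{\varphi_{j+1}}$ did below row $i$. Finally, you never address well-definedness: both $\varphi_{t^Q}(R(t^Q{+}1))\ge q_{t^Q,k}$ and $\epsilon_{j+1}(C(j))\ge q_{j+1,k}$ rest on the Dyck-path inequality $x^s(i)\ge\sum_{p=1}^{s-1}q_{p,k}$ of Proposition~\ref{propF}(a), which is where the defining inequalities~\eqref{defpoly} of the polytope actually enter the argument.
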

When $i=1$, the formula simplifies to 
$$\mathcal{K}^Q = \left(\prod_{j = 2}^{k} f_{k+2-j}^{\varphi_{k+2-j}}\right)f_1^{q_{1,k}}.$$ Using the definition of the crystal operators in section \ref{secpoly}, it is immediate that $\mathcal{K}^Q$ maps $Q$ to $R$. Before turning to the case $i>1$, we illustrate theorem~\ref{thmpolytope} with a concrete example.
\begin{example}
    Let $(m,i,n)=(9,3,5)$ and consider
    $$A=\begin{array}{|c|c|c|}\hline 0 & 1 & 1\\ \hline 1 & 3 & 4 \\ \hline 1 & 3 & 1\\\hline \end{array}$$
We apply theorem~\ref{thmpolytope} to construct $A$ from the highest weight element. In each inductive step, the key idea is to first create the $k$-th row of $Q$ in the first row of $R$ and then apply the appropriate crystal operators to move this row into the $k$-th row of $R$, thereby transforming $R$ into $Q$. Throughout this example, we shall use the descriptions of the action of the crystal operators in figures \ref{figflei},\ref{figflgri},\ref{figelei} and \ref{figelgri} without any further mention. \medskip

\noindent \textbf{First step:} We begin with the pair
$$Q=\begin{array}{|c|c|c|}\hline 0 & 0 & 0\\ \hline 0 & 0& 0 \\ \hline 1 & 3 & 1\\\hline \end{array}\ ,\ \ \ R= \begin{array}{|c|c|c|}\hline 0 & 0 & 0\\ \hline 0 & 0& 0 \\ \hline 0 & 0 & 0\\\hline \end{array}.$$
Here $t^Q=1$. The path from $R$ to $Q$ is given by:\medskip

\begin{tikzpicture}[every node/.style={inner sep=4pt, font=\footnotesize}]
    \node (v1) {$\begin{array}{|c|c|c|} \hline
    0 & 0 & 0 \\ \hline
    0 & 0 & 0 \\ \hline
    0 & 0 & 0 \\ \hline
    \end{array}$};
    
    \node[right=2cm of v1] (v2) {$\begin{array}{|c|c|c|} \hline
    0 & 0 & 9 \\ \hline
    0 & 0 & 0 \\ \hline
    0 & 0 & 0 \\ \hline
    \end{array}$};
    
    \node[right=2cm of v2] (v3) {$\begin{array}{|c|c|c|} \hline
    0 & 9 & 0 \\ \hline
    0 & 0 & 0 \\ \hline
    0 & 0 & 0 \\ \hline
    \end{array}$};
    
    \node[right=2cm of v3] (v4) {$\begin{array}{|c|c|c|} \hline
    1 & 8 & 0 \\ \hline
    0 & 0 & 0 \\ \hline
    0 & 0 & 0 \\ \hline
    \end{array}$};
    
    \node[below=1cm of v4] (v5) {$\begin{array}{|c|c|c|} \hline
    1 & 3 & 5 \\ \hline
    0 & 0 & 0 \\ \hline
    0 & 0 & 0 \\ \hline
    \end{array}$};

    \node[left=2cm of v5] (v05) {$\begin{array}{|c|c|c|} \hline
    1 & 3 & 1 \\ \hline
    0 & 0 & 0 \\ \hline
    0 & 0 & 0 \\ \hline
    \end{array}$};

    \node[below=1cm of v05] (v051) {$\begin{array}{|c|c|c|} \hline
    1 & 3 & 0 \\ \hline
    0 & 0 & 1 \\ \hline
    0 & 0 & 0 \\ \hline
    \end{array}$};

    \node[left=2cm of v051] (v052) {$\begin{array}{|c|c|c|} \hline
    1 & 0 & 0 \\ \hline
    0 & 3 & 1 \\ \hline
    0 & 0 & 0 \\ \hline
    \end{array}$};

    \node[left=2cm of v05] (v6) {$\begin{array}{|c|c|c|} \hline
    0 & 0 & 0 \\ \hline
    1 & 3 & 1 \\ \hline
    0 & 0 & 0 \\ \hline
    \end{array}$};

    \node[left=2cm of v6] (v7) {$\begin{array}{|c|c|c|} \hline
    0 & 0 & 0 \\ \hline
    0 & 0 & 0 \\ \hline
    1 & 3 & 1 \\ \hline
    \end{array}$};

    \draw[->] (v1) -- node[midway, above] {$f_3^{9}$} (v2);
    \draw[->] (v2) -- node[midway, above] {$f_2^{9}$} (v3);
    \draw[->] (v3) -- node[midway, above] {$f_1$} (v4);
    \draw[->] (v4) -- node[midway, right] {$e_2^{8-3}$} (v5);
    \draw[->] (v5) -- node[midway, above] {$e_3^{5-1}$} (v05);
    \draw[->] (v05) -- node[midway, above] {$f_4^{5}$} (v6);
    \draw[->] (v6) -- node[midway, above] {$f_5^{5}$} (v7);
    \draw[->] (v05) -- node[midway, right] {$f_4$} (v051);
    \draw[->] (v051) -- node[midway, above] {$f_4^3$} (v052);
    \draw[->] (v052) -- node[midway, left] {$f_4$} (v6);
\end{tikzpicture}

\medskip

\textbf{Second step:} Next we consider
$$Q=\begin{array}{|c|c|c|}\hline 0 & 0 & 0\\ \hline 1 & 3& 4 \\ \hline 1 & 3 & 1\\\hline \end{array}\ ,\ \ \ R= \begin{array}{|c|c|c|}\hline 0 & 0 & 0\\ \hline 0 & 0& 0 \\ \hline 1 & 3 & 1\\\hline \end{array}.$$
Again we have $t^Q=1.$ The path is given by:\medskip

\begin{tikzpicture}[every node/.style={inner sep=4pt, font=\footnotesize}]
    \node (v1) {$\begin{array}{|c|c|c|} \hline
    0 & 0 & 0 \\ \hline
    0 & 0 & 0 \\ \hline
    1 & 3 & 1 \\ \hline
    \end{array}$};
    
    \node[right=2cm of v1] (v2) {$\begin{array}{|c|c|c|} \hline
    0 & 0 & 8 \\ \hline
    0 & 0 & 0 \\ \hline
    1 & 3 & 1 \\ \hline
    \end{array}$};
    
    \node[right=2cm of v2] (v3) {$\begin{array}{|c|c|c|} \hline
    0 & 5 & 3 \\ \hline
    0 & 0 & 0 \\ \hline
    1 & 3 & 1\\ \hline
    \end{array}$};
    
    \node[right=2cm of v3] (v4) {$\begin{array}{|c|c|c|} \hline
    0 & 5 & 3 \\ \hline
    0 & 0 & 0 \\ \hline
    1 & 4 & 0 \\ \hline
    \end{array}$};
    
    \node[below=1cm of v4] (v5) {$\begin{array}{|c|c|c|} \hline
    1 & 4 & 3\\ \hline
    0 & 0 & 0 \\ \hline
    1 & 4 & 0  \\ \hline
    \end{array}$};

    \node[left=2cm of v5] (v05) {$\begin{array}{|c|c|c|} \hline
    1 & 4 & 3\\ \hline
    0 & 0 & 0 \\ \hline
    1 & 3 & 1  \\ \hline
    \end{array}$};

    \node[left=2cm of v05] (v6) {$\begin{array}{|c|c|c|} \hline
    1 & 3 & 4\\ \hline
    0 & 0 & 0 \\ \hline
    1 & 3 & 1  \\ \hline
    \end{array}$};

    \node[left=2cm of v6] (v7) {$\begin{array}{|c|c|c|} \hline
   1 & 3 & 4\\ \hline
    0 & 0 & 0 \\ \hline
    1 & 3 & 1  \\ \hline
    \end{array}$};
    \node[below=1cm of v7] (v8) {$\begin{array}{|c|c|c|} \hline
    0 & 0 & 0 \\ \hline
    1 & 3 & 4\\ \hline
    1 & 3 & 1  \\ \hline
    \end{array}$};

    \draw[->] (v1) -- node[midway, above] {$f_3^{8}$} (v2);
    \draw[->] (v2) -- node[midway, above] {$f_2^{5}$} (v3);
    \draw[->] (v3) -- node[midway, above] {$f_2$} (v4);
    \draw[->] (v4) -- node[midway, right] {$f_1$} (v5);
    \draw[->] (v5) -- node[midway, above] {$e_2$} (v05);
    \draw[->] (v05) -- node[midway, above] {$e_2$} (v6);
    \draw[->] (v6) -- node[midway, above] {$e_3^0$} (v7);
    \draw[->] (v7) -- node[midway, right] {$f_4^8$} (v8);
\end{tikzpicture}

Let $B=f_3^8(R)$. Then $\varphi_2(B)=6$. For $k\le 4$, we have $$\max\{r: S^{f_2^k(B)}(2,r)=9\}=3,$$ while for $k=5$ the maximum is attained at $r=5$. Thus $f_2$ acts on the row indexed by $3$ five times and subsequently on the row indexed by $5$ once. Similarly, if $C=f_1f_2^6f_3^8(R),$ then $$\min\{r: S^{C}(2,r)=8\}=5,\ \ \min\{r: S^{e_2(C)}(2,r)=8\}=3.$$ This shows that although $f_1$ introduces changes in the column indexed by $2$, the subsequent action of $e_2$ precisely cancels the disturbances (the changes in the entries already constructed in the previous inductive steps) introduced by $f_2$. We prove that this cancellation holds in general for all raising and lowering operators involved.\medskip

\noindent\textbf{Third step:} Finally, consider
$$Q=\begin{array}{|c|c|c|}\hline 0 & 1 & 1\\ \hline 1 & 3& 4 \\ \hline 1 & 3 & 1\\\hline \end{array}\ ,\ \ \ R= \begin{array}{|c|c|c|}\hline 0 & 0 & 0\\ \hline 1 & 3& 4 \\ \hline 1 & 3 & 1\\\hline \end{array}.$$
with $t^Q=2.$ The corresponding path is:\medskip

\begin{tikzpicture}[every node/.style={inner sep=4pt, font=\footnotesize}]
    \node (v1) {$\begin{array}{|c|c|c|} \hline
    0 & 0 & 0\\ \hline 1 & 3& 4 \\ \hline 1 & 3 & 1 \\ \hline
    \end{array}$};
    
    \node[right=2cm of v1] (v2) {$\begin{array}{|c|c|c|} \hline
    0 & 0 & 4\\ \hline 1 & 3& 4 \\ \hline 1 & 3 & 1\\ \hline
    \end{array}$};
    
    \node[right=2cm of v2] (v3) {$\begin{array}{|c|c|c|} \hline
    0 & 1 & 3\\ \hline 1 & 3& 4 \\ \hline 1 & 3 & 1\\ \hline
    \end{array}$};
    
    \node[right=2cm of v3] (v4) {$\begin{array}{|c|c|c|} \hline
   0 & 1 & 1\\ \hline 1 & 3& 4 \\ \hline 1 & 3 & 1\\ \hline
    \end{array}$};

    \draw[->] (v1) -- node[midway, above] {$f_3^{4}$} (v2);
    \draw[->] (v2) -- node[midway, above] {$f_1$} (v3);
    \draw[->] (v3) -- node[midway, above] {$e_3^{3-1}$} (v4);
\end{tikzpicture}

\noindent Therefore, by successive application of the inductive algorithm of theorem~\ref{thmpolytope}, we obtain a path from the highest weight element to $A$. \qed
\end{example}

\noindent For the rest of the section, we assume that $i > 1$. We set $$R(i) := f_i^{\varphi_i}(R),\ \ R(p):=f_{\ell}^{\varphi_{s}}(R(p+1))\ \text{ for }2\le p \le i-1.$$
Also for notational convenience, we write $$S^p(j),\ \  x^p(j),\ \ r^p(j),\ \ \ell^p$$ instead of $$S^{R(p)}(p-1,j),\  x^{R(p)}(p-1,j),\ r^{R(p)}(p-1,j),\ \ell^{R(p)}(p-1)$$ respectively for $2\le p\le i.$ 
\subsection{Action of the operator \texorpdfstring{$F$}{F}} Applying remark \ref{remfphi} successively, for $2\le p\le i,$ the element $R(p)$ is given by
\begin{equation}\label{polrs}
    R(p)_{s,t}=\begin{cases}
    m-\sum_{j=k+1}^n r_{i,j}-x^i(i)& \text{ if } (s,t)=(i,i),\\
    r_{s,t}+\delta_{(p\le s<i)}x^{s+1}(\ell)-\delta_{(p<s\le i)}x^s(\ell) & \text{ else.}
\end{cases}
\end{equation}
    Also, since by definition $x^i(n)=r_{i,n}$, we obtain inductively
    \begin{equation}\label{valxn}
        x^p(n)=\sum_{j=p}^i r_{j,n},\ \ \ 2\le p\le i.
    \end{equation}
In particular, we have $R(p)_{s,n}=0$ for all $s\geq p+1.$ The following easy lemma is very useful.
\begin{lem}\label{maxm}
    For all $2\le p\le i$, we have $\sum_{j=i}^n R(p)_{p,j}=m.$ In particular, we have $S^{p}(i)=M^{R(p)}(p-1)=m.$
\end{lem}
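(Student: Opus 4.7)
The plan is to establish $\sum_{j=i}^n R(p)_{p,j}=m$ by downward induction on $p$, starting at $p=i$ and descending to $p=2$; the ``in particular'' clause will then follow from a short Dyck-path bound.

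For the base case $p=i$, I would note that $R(i)=f_i^{\varphi_i}(R)$ differs from $R$ only at the entry $(i,i)$, which is promoted from $0$ to $\varphi_i(R)$. Since $r_{j,i}=0$ for every $j$ and $r_{i,j}=0$ for $i\le j\le k$ (the rows $i,\ldots,k$ of $R$ are all zero), the definition of $\varphi_i$ collapses to $\varphi_i(R)=m-\sum_{j=i+1}^n r_{i,j}$, and summing over $j\in\{i,\ldots,n\}$ then yields exactly $m$.

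For the inductive step I assume $\sum_{j=i}^n R(p+1)_{p+1,j}=m$ with $p<i$. Applying remark~\ref{remfphi} to $R(p)=f_p^{\varphi_p}(R(p+1))$ gives $R(p)_{p,t}=R(p+1)_{p,t}+x^{R(p+1)}(p,t)$; summing over $t\ge i$ and invoking corollary~\ref{sumphi} produces
\begin{equation*}
    \sum_{t=i}^n R(p)_{p,t}=\sum_{t=i}^n R(p+1)_{p,t}+\varphi_p(R(p+1)).
\end{equation*}
Combining proposition~\ref{sumx} at $r=1$ with corollary~\ref{sumphi} (and the vanishing of $x^A(s,p)$ for $p<r^A(s,1)$ recorded in remark~\ref{xandr}) yields the auxiliary identity $\varphi_s(A)=M^A(s)-\sum_{j=i}^n a_{s,j}$ for $s<i$. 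Substituting this with $s=p$ and $A=R(p+1)$ collapses the right-hand side above to $M^{R(p+1)}(p)$, so the induction reduces to proving $M^{R(p+1)}(p)=m$.

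For the latter, the key observation is that $R(p+1)$ agrees with $R$ in every column whose first index is $\le p$, because each operator $f_q^{\varphi_q}$ with $q\in\{p+1,\ldots,i-1\}$ modifies only columns $q$ and $q+1$, while $f_i^{\varphi_i}$ modifies only the entry $(i,i)$. In particular $R(p+1)_{p,i}=r_{p,i}=0$, so $S^{R(p+1)}(p,i)=R(p+1)_{p,i}+\sum_{j=i}^n R(p+1)_{p+1,j}=m$, giving $M^{R(p+1)}(p)\ge m$. The reverse inequality follows because every step path $p_{p,r}$ extends to a Dyck path from $(1,i)$ to $(i,n)$ by prepending $(1,i),\ldots,(p-1,i)$ and appending $(p+2,n),\ldots,(i,n)$; since $R(p+1)\in B^{i,m}$, the bound \eqref{defpoly} forces the Dyck sum, and a fortiori $S^{R(p+1)}(p,r)$, to be at most $m$. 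The same column-preservation argument gives $R(p)_{p-1,i}=0$, so $S^p(i)=R(p)_{p-1,i}+\sum_{j=i}^n R(p)_{p,j}=m$, and the Dyck bound again forces $M^{R(p)}(p-1)=m$.

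The main obstacle, I expect, is not any single computation but the careful bookkeeping required to track which entries have been disturbed by the successive operators $f_i^{\varphi_i},\ldots,f_{p+1}^{\varphi_{p+1}}$, and to extract the identity $\varphi_s(A)=M^A(s)-\sum_j a_{s,j}$ cleanly from the piecewise definition of $\varphi_s$. The degenerate cases $p=2$ and $p+1=i$, where the prepended or appended Dyck-path segments become empty, must be handled via the empty-product and empty-sum conventions adopted in the preliminaries.
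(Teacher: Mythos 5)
Your proof is correct and follows essentially the same route as the paper: downward induction on $p$, with the base case read off from the definition of $\varphi_i$ and the inductive step amounting to the identity $\sum_{j}R(p)_{p,j}=\sum_j R(p+1)_{p,j}+\varphi_p(R(p+1))=M^{R(p+1)}(p)$, which is what the paper obtains by combining Eq.~\eqref{recx} at $k=\ell^{p+1}-1$ with Eqs.~\eqref{polrs} and \eqref{valxn}. The only (welcome) difference is presentational: you make explicit the upper bound $M^{R(p+1)}(p)\le m$ via the sub-Dyck-path extension and the column-preservation observation, which the paper leaves implicit in its ``in particular'' induction hypothesis.
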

\begin{proof}
    Clearly the result holds for $p=i.$ By induction we assume that the result holds for $p+1$ for $2\le p<i.$ Hence $\sum_{j=i}^nR(p+1)_{p+1,j}=m$ and therefore $S^{p+1}(i)=m.$ Now applying Eq.\eqref{recx} for $A=R(p+1)$ and $k=\ell^{p+1}-1,$ we obtain that $$S^{p+1}(n)+\sum_{j=i}^{n-1}x^{p+1}(j)=m.$$ 
    Now the result follows from Eq.\eqref{polrs} and Eq.\eqref{valxn} since 
    $$S^{p+1}(n)=\sum_{j=i}^n r_{p,j}+r_{p+1,n}+x^{p+2}(n)=\sum_{j=i}^n r_{p,j}+x^{p+1}(n).$$
\end{proof}
We now prove the following crucial proposition. 
\begin{prop}\label{propF}
    For any $t^Q+1\le s \le i$ we have
    \begin{enumerate}[label = (\alph*)]
        \item $x^{s}(i) \ge \sum_{j=1}^{s-1} q_{j,k}$,
        \item for any $2\le p_{s-1} \le \ell^s$, there exist $p_{s},p_{s+1},\dots,p_{i-1}$ such that 
        \begin{equation}\label{funnyeq1}
            r^{j+1}(p_j-1) < r^j(p_{j-1}) \le r^{j+1}(p_j),\ \ s \le j \le i-1
        \end{equation}
        and
        \begin{equation}\label{funnyeq2}
            \sum_{j=r^s(p_{s-1})}^n x^s(j) = \sum_{u=s}^{i-1}\sum_{j=r^u(p_{u-1})}^{r^{u+1}(p_u)} q_{u,j} + \sum_{j=r^i(p_{i-1})}^n q_{i,j} - \sum_{j=r^s(p_{s-1})+1}^n q_{s-1,j}.
        \end{equation}
    \end{enumerate}
\end{prop}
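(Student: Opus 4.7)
The plan is to establish both claims simultaneously by downward induction on $s$, starting at the base case $s=i$ and decreasing to $s=t^Q+1$. For the base case, note that $R(i)=f_i^{\varphi_i}(R)$ differs from $R$ only in the $(i,i)$-entry, which by Lemma~\ref{maxm} together with the vanishing of rows $\le k$ in $R$ equals $m-\sum_{j=k+1}^n q_{i,j}$. From this one can write $S^i(r)$ explicitly in the three cases $r=i$, $i<r\le k$, and $r>k$, determine the sequence $r^i(\cdot)$, and compute the $x^i$-values via Eq.~\eqref{valx}. Since the index range $\{s,\dots,i-1\}$ in (b) is empty at $s=i$, the identity~\eqref{funnyeq2} reduces to a single telescoped equation that follows directly from Proposition~\ref{sumx}. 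For (a) at $s=i$, I would apply the polytope (Dyck-path) inequality in $Q$ to the staircase path $(1,i),\dots,(1,k),(2,k),\dots,(i-1,k),(i-1,k+1),\dots,(i-1,r),(i,r),\dots,(i,n)$, whose $Q$-sum equals $\sum_{c=1}^{i-1}q_{c,k}+S^i(r)$ and is $\le m$; together with a variant for $r\le k$ this yields $\max_{r>i}S^i(r)\le m-\sum_{c=1}^{i-1}q_{c,k}$, hence $x^i(i)\ge\sum_{c=1}^{i-1}q_{c,k}$.

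For the inductive step, assume (a) and (b) hold at level $s+1$. Given $p_{s-1}\in[2,\ell^s]$, I would define $p_s$ as the smallest index with $r^s(p_{s-1})\le r^{s+1}(p_s)$; this automatically forces $r^{s+1}(p_s-1)<r^s(p_{s-1})$, giving~\eqref{funnyeq1} at $j=s$, while the remaining indices $p_{s+1},\dots,p_{i-1}$ and the remaining inequalities in~\eqref{funnyeq1} are produced by the inductive hypothesis (b) at $s+1$ with input $p_s$. The identity~\eqref{funnyeq2} at level $s$ is then obtained by combining three ingredients: the formula $S^s(r)=S^R(s-1,r)+\sum_{j=r}^n x^{s+1}(j)$ from~\eqref{polrs} (valid since row $s-1$ is untouched by $f_s^{\varphi_s}\cdots f_i^{\varphi_i}$); Proposition~\ref{sumx} applied at level $s$, which rewrites $\sum_{j=r^s(p_{s-1})}^n x^s(j)$ as $S^s(r^s(p_{s-1}))-S^s(n)+R(s)_{s,n}$; and the explicit evaluation of $S^R(s-1,r)$ in terms of $q$-entries, using the zero structure of $R$ on rows $\le k$. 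Subtracting the $(s+1)$-level identity for $\sum_{j=r^{s+1}(p_s)}^n x^{s+1}(j)$ from $\sum_{j=r^s(p_{s-1})}^n x^{s+1}(j)$ then produces, after simplification, the extra summand $\sum_{j=r^s(p_{s-1})}^{r^{s+1}(p_s)} q_{s,j}$ that extends the outer sum in~\eqref{funnyeq2} to start at $u=s$. Claim (a) at level $s$ is then deduced by the same Dyck-path argument in $Q$ as in the base case, now using the hypothesis (a) at $s+1$ to control the added contribution $\sum_{j=r}^n x^{s+1}(j)$.

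The main obstacle will be the combinatorial verification surrounding the choice of $p_s$: one must check that the intermediate tail $\sum_{j=r^s(p_{s-1})}^{r^{s+1}(p_s)-1}x^{s+1}(j)$ telescopes cleanly into the required $q$-entries, and this depends on the interplay between the support of $x^{s+1}(\cdot)$ (Remark~\ref{xandr}) and the jump points $r^s(\cdot)$. In particular, the boundary cases $r^s(p_{s-1})=r^{s+1}(p_s)$ versus strict inequality require separate accounting, and the precise relationship between $r^s(p_{s-1})$ and the values $R(s)_{s,j}$ for $j$ in this intermediate range must be pinned down using Eq.~\eqref{polrs} and the polytope constraints on $Q$. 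A further subtle point is ensuring the constructed $p_s$ lies in the range $[2,\ell^{s+1}]$ needed to invoke the inductive hypothesis, which I expect to follow from the monotonicity of the sequences $r^s(\cdot)$ and $r^{s+1}(\cdot)$ together with the hypothesis $p_{s-1}\ge 2$. Once this combinatorial skeleton is sorted out, both~\eqref{funnyeq2} at level $s$ and the inequality (a) follow by algebraic substitution.
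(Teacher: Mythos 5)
Your plan is essentially the paper's own proof: backward induction on $s$, the base case via the explicit computation of $S^i(r)$ and the Dyck-path bound in $Q$, the choice of $p_s$ through the sandwich condition $r^{s+1}(p_s-1)<r^s(p_{s-1})\le r^{s+1}(p_s)$ (the intermediate tail of $x^{s+1}$ vanishing by Remark~\ref{xandr}), and Proposition~\ref{sumx} combined with Eq.~\eqref{polrs} to produce the extra summand $\sum_{j=r^s(p_{s-1})}^{r^{s+1}(p_s)}q_{s,j}$, so the approach is correct and matches the paper. The one small adjustment: in deducing (a) at level $s$, the added contribution $\sum_{j=r}^n x^{s+1}(j)$ is rewritten as $Q$-entries along a sub-Dyck path using hypothesis (b) at level $s+1$, with (a) at $s+1$ needed only to guarantee $r^{s+1}(1)=i$ (hence $p_s\ge 2$ and $r^{s+1}(2)\ge k+1$), rather than (a) alone as your wording suggests.
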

\begin{proof}
    To begin with, we may assume, without loss of generality, that $t^Q=1$, i.e., $q_{1,k} \neq 0$. We will prove both parts $(a)$ and $(b)$ using backward induction on $s$
    
    \noindent \textbf{Base case:} Assume that $s=i$. It is easy to see that

    $$S^i(r)=\begin{cases}
        m & \text{ if } r=i,\\
        \sum_{j=k+1}^{n}q_{i,j} & \text{ if } i < r < k+1,\\
        \sum_{j=k+1}^{r} q_{i-1,j} + \sum_{j=r}^{n}q_{i,j} & \text{ if } k+1 \le r \le n.
    \end{cases}$$
    Note that if $r\neq i,$ then $\sum_{j=1}^{i-1}q_{j,k} + S^i(r)$ is a sum of entries of a sub Dyck path in $Q$. Since $q_{1,k}>0$, we obtain
    $$S^i(r) \le m- \sum_{j=1}^{i-1}q_{j,k} < m.$$
    Hence $r^i(1) = i$ by Eq.\eqref{defr} and by Eq.\eqref{recx} we have
    $$x^i(i) = m - S^i(r^i(2))\ge m - \left(m - \sum_{j=1}^{i-1} q_{j,k}\right) = \sum_{j=1}^{i-1} q_{j,k}$$
    proving part $(a)$. Part $(b)$ follows from proposition \ref{sumx} noting the fact $r^i(s)\geq k+1$ for all $s>1$ since $r^i(1) = i$.\medskip
    
    \noindent \textbf{Induction step:} Now assume the statement holds for $s+1$ with some $2 \le s<i$. In particular, for any $p_s\geq 2$, we have 
    \begin{equation}\label{eq0}
        \sum_{j=r^{s+1}(p_{s})}^n x^{s+1}(j)  = \sum_{u=s+1}^{i-1}\sum_{j=r^u(p_{u-1})}^{r^{u+1}(p_u)} q_{u,j} + \sum_{j=r^i(p_{i-1})}^n q_{i,j}- \sum_{j=r^{s+1}(p_s)+1}^n q_{s,j}.
    \end{equation}
    for some $p_{s+1},p_{s+2},\dots,p_{i-1}$ satisfying Eq.\eqref{funnyeq1}.
    
      Note that we have $S^s(i)=m$ by lemma \ref{maxm}. Also from part (a) of induction hypothesis and remark \ref{xandr} together imply $r^{s+1}(1)=i$ and thus $r^{s+1}(2)\geq k+1$. We claim that for $i<r\le n$, we must have $$S^s(r)\le m-\sum_{j=1}^{s-1}q_{j,k}<m.$$
     \noindent\textbf{Case I:} If $i< r < k+1$, then by Eq.\eqref{polrs}
     $$S^s(r)=\sum_{j=k+1}^n q_{s,j} + \sum_{j=r^{s+1}(2)}^n x^{s+1}(j).$$
    Let $p_{s} = 2$. Then applying Eq.\eqref{eq0} by induction hypothesis, we obtain
    $$S^s(r)=\sum_{j=k+1}^{r^{s+1}(p_{s})} q_{s,j} + \sum_{u=s+1}^{i-1}\sum_{j=r^u(p_{u-1})}^{r^{u+1}(p_u)} q_{u,j} + \sum_{j=r^i(p_{i-1})}^n q_{i,j}$$
    \noindent \textbf{Case II:} If $k+1 \le r \le n$, again by using Eq.\eqref{polrs} we obtain
    $$S^{s}(r)=\sum_{j=k+1}^{r} q_{s-1,j} + \sum_{j=r}^n q_{s,j} + \sum_{j=r^{s+1}(p_{s})}^n x^{s+1}(j),$$
    where $r$ satisfies $r^{s+1}(p_{s}-1)<r\le r^{s+1}(p_{s})$. Since $r^{s+1}(p_{s})\geq k+1$ and $r^{s+1}(1)=i,$ we must have $p_s\geq 2$. Therefore, again using Eq.\eqref{eq0} by induction hypothesis, we have
    $$S^s(r)=\sum_{j=k+1}^{r} q_{s-1,j} + \sum_{j=r}^{r^{s+1}(p_{s})} q_{s,j} + \sum_{u=s+1}^{i-1}\sum_{j=r^u(p_{u-1})}^{r^{u+1}(p_u)} q_{u,j} + \sum_{j=r^i(p_{i-1})}^n q_{i,j}$$
    Therefore, we have that $\sum_{j=1}^{s-1}q_{j,k} + S^s(r)$ is the sum of entries of a sub Dyck path in $A$ for any $i < r \le n$, which proves the claim since
    \begin{equation*}
        S^s(r) \le m - \sum_{j=1}^{s-1}q_{j,k} < m.
    \end{equation*}
    This implies $r^s(1) = i$ by Eq.\eqref{defr} and therefore using Eq.\eqref{recx} we obtain
    $$x^s(i)=m - S^s(r^s(2))\ge m - \left(m - \sum_{j=1}^{s-1} q_{j,k}\right) = \sum_{j=1}^{s-1} q_{j,k}.$$
    proving part $(a)$. \medskip
    
    To prove $(b)$ assume that $2 \le p_{s-1}\le \ell^s$. By proposition \ref{sumx} and then using Eq.\eqref{polrs}, \eqref{valxn} we have
    \begin{align*}
        \sum_{j=r^s(p_{s-1})}^n x^s(j) &= S^s(r^s(p_{s-1})) - S^s(n) + x^s(n)\\
        & = \sum_{j=r^s(p_{s-1})}^n q_{s,j} + \sum_{j=r^s(p_{s-1})}^n x^{s+1}(j) - \sum_{j=r^s(p_{s-1})+1}^n q_{s-1,j}\\
        & = \sum_{j=r^s(p_{s-1})}^n q_{s,j} + \sum_{j=r^{s+1}(p_{s})}^n x^{s+1}(j) - \sum_{j=r^s(p_{s-1})+1}^n q_{s-1,j}
    \end{align*}
    where $r^{s+1}(p_{s}-1)<r^s(p_{s-1})\le r^{s+1}(p_{s})$ (such a row index exists since $r^s(p_{s-1})>i$). Since $p_s\geq 2$, the induction hypothesis applies and substituting the second sum on the right by Eq.\eqref{funnyeq2} we obtain the desired. This completes the proof.
\end{proof}

\begin{rem}\label{rem00}
    We can apply $f_{t^Q}^{q_{t^Q,k}}$ on $R(t^Q+1)$ since $x^{t^Q+1}(i) \ge \sum_{j=1}^{t^Q}q_{j,k}> 0$. 
\end{rem}
\subsection{Action of the operators \texorpdfstring{$E$}{E}} We now define elements $C(j)$ by $$C(t^Q):=f_{t^Q}^{q_{t^Q,k}}(R(t^Q+1)),\ \ C(j):=e_j^{\epsilon_j-q_{j,k}}(C(j-1))\ \text{ for } t^Q<j\le i.$$
We begin with the following proposition.
\begin{prop}
     The following hold.
    \begin{enumerate}
        \item Let $t^Q\le j\le i-2.$ For any $1\le s\le \ell^{j+2},$ we have 
    \begin{equation}\label{eqej}
        S^{C(j)}(j+1,r^{j+2}(s))=-\sum_{p=t^Q}^j q_{p,k}+m-\sum_{p=r^{j+2}(s+1)}^n x^{j+2}(p),
    \end{equation}
    and for $1\le s< \ell^{j+2}$, if $r^{j+2}(s)<r<r^{j+2}(s+1)$, then we have 
$$S^{C(j)}(j+1,r)\le S^{C(j)}(j+1,r^{j+2}(s)).$$
    \item For $t^Q\le j\le i-1,$ the element $C(j)$ is given by
    \begin{equation}\label{form0}
        C(j)_{s,\ell}=\begin{cases}
      r_{s,\ell}  &\ \text{ if } s\le j \text{ and } \ell\neq i,\\
        q_{s,k} &\ \text{ if } s\le j \text{ and } \ell=i,\\
        x^{j+2}(i)-\sum_{p=t^Q}^j r_{p,k}    &\ \text{ if } s=j+1 \text{ and } \ell=i,\\
        r_{s,\ell}+x^{j+2}(\ell)  &\ \text{ if } s=j+1 \text{ and }\ell\neq i,\\
        m-\sum_{p=k+1}^nr_{i,p}-x^i(i) &\ \text{ if } (s,\ell)=(i,i),\\
        r_{s,\ell}+x^{s+1}(\ell)-x^{s}(\ell) &\ \text{ otherwise}.
    \end{cases}
    \end{equation}
    \end{enumerate}
\end{prop}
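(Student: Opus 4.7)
The two parts are established by a simultaneous induction on $j \in \{t^Q, t^Q+1, \dots, i-1\}$, where part (1) is additionally restricted to $j \le i-2$. For the base case $j = t^Q$, we compute $C(t^Q) = f_{t^Q}^{q_{t^Q, k}}(R(t^Q+1))$ directly. Proposition~\ref{propF}(a) applied with $s = t^Q+1$ yields $x^{t^Q+1}(i) \ge q_{t^Q, k}$, so by Remark~\ref{xandr} we have $r^{t^Q+1}(1) = i$. Lemma~\ref{actionrow} then ensures that during each of the $q_{t^Q, k}$ applications of $f_{t^Q}$ the step-path maximum is uniquely attained at row $i$, so every application moves one unit from position $(t^Q+1, i)$ to $(t^Q, i)$. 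Substituting the explicit description of $R(t^Q+1)$ given by Eq.~\eqref{polrs} yields~\eqref{form0} at $j = t^Q$.

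For the inductive step we assume both parts hold at $j-1$ with $j > t^Q$ and compute $C(j) = e_j^{\epsilon_j - q_{j, k}}(C(j-1))$. Part (1) at $j-1$ characterises the step-path sums in columns $j$ and $j+1$ of $C(j-1)$: by Lemma~\ref{maxm}, the global maximum equals $m - \sum_{p = t^Q}^{j-1} q_{p, k}$ and is attained at row $i$, while the remaining local maxima at the rows $r^{j+1}(s)$, $s \ge 2$, are strictly smaller. The value $\epsilon_j(C(j-1))$, read off from Eq.~\eqref{epfin} together with the inductive formula~\eqref{form0} at $j-1$, equals $C(j-1)_{j, i} - q_{j, k}$; an analogue of Lemma~\ref{actionrow} for raising operators, combined with this strict gap, guarantees that all $\epsilon_j - q_{j, k}$ applications of $e_j$ act at row $i$, moving one unit each time from $(j, i)$ to $(j+1, i)$. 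Feeding these modifications into~\eqref{form0} at $j-1$ produces~\eqref{form0} at level $j$.

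Part (1) at $j$ (whether at the base case or the inductive step) is then obtained by summing $C(j)_{s,\ell}$ along the step path $p_{j+1, r}$ via~\eqref{form0}. Contributions from columns $j+1$ and $j+2$ telescope through the $x^{j+2}$ identities, and Proposition~\ref{sumx} converts the tail $\sum_{p \ge r^{j+2}(s+1)} x^{j+2}(p)$ into the difference $S^{j+2}(r^{j+2}(s+1)) - S^{j+2}(n) + x^{j+2}(n)$, which combined with Lemma~\ref{maxm} (applied to $R(j+2)$) produces the stated identity at the knots $r = r^{j+2}(s)$. The inequality for $r$ strictly between $r^{j+2}(s)$ and $r^{j+2}(s+1)$ is then the direct translation of the defining maximality of $r^{j+2}(s+1)$ in the sequence $R^{R(j+2)}(j+1)$ via Eq.~\eqref{defr}.

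The principal technical obstacle is the inductive step for part (2): one must establish both the precise value of $\epsilon_j(C(j-1))$ and the fact that the position of the maximum never leaves row $i$ during the $\epsilon_j - q_{j, k}$ applications of $e_j$. Both ingredients rely crucially on the strict gap between the global maximum and the subsidiary local maxima supplied by part (1) at $j-1$, together with a raising-operator analogue of Lemma~\ref{actionrow}. Once this is in place, the remaining book-keeping --- verifying every case of the piecewise formula~\eqref{form0} --- is routine.
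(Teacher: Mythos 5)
Your base case is fine and matches the paper's (brief) treatment, and your derivation of part (1) from \eqref{form0} is in the right spirit, but the inductive step for part (2) is wrong, and the error is precisely at the crux of the argument. In the polytope model $e_j$ (for $j<i$) acts at $q_-^{j}$, the \emph{minimum} row index at which the step-path maximum over columns $j,j+1$ is attained. By \eqref{eqej} at level $j-1$, consecutive knot values satisfy $S^{C(j-1)}(j,r^{j+1}(s+1))=S^{C(j-1)}(j,r^{j+1}(s))+x^{j+1}(r^{j+1}(s+1))$, so the sums \emph{increase} along the knots and the global maximum $m-\sum_{p=t^Q}^{j-1}q_{p,k}$ is attained at the top knot $r^{j+1}(\ell^{j+1})=n$, not at row $i$; your claim that the maximum sits at row $i$ with the other knots strictly smaller is backwards. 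Consequently the $\epsilon_j-q_{j,k}$ applications of $e_j$ are \emph{not} concentrated at row $i$: they begin at the top knot and migrate downward, acting $x^{j+1}(r^{j+1}(s))$ times at each knot row with $s\ge 2$ and only the remaining $x^{j+1}(i)-\sum_{p=t^Q}^{j}q_{p,k}$ times at row $i$. This distributed action is exactly the ``cancellation of disturbances'' shown in the paper's second-step example, where $e_2$ acts first at row $5$ and then at row $3$. If every application moved a unit from $(j,i)$ to $(j+1,i)$ as you assert, the resulting element would have column-$j$ entries $r_{j,\ell}+x^{j+1}(\ell)$ at rows $\ell\ge k+1$, contradicting the very formula \eqref{form0} you are proving; likewise your value of $\epsilon_j(C(j-1))$ is incorrect, the true value being $\sum_{p=i}^{n}x^{j+1}(p)-\sum_{p=t^Q}^{j-1}q_{p,k}$, which involves all the $x^{j+1}(p)$ and not just the entry at row $i$.

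This is not a repairable bookkeeping slip: part (1) is carried along in the induction precisely to locate where $e_{j+1}$ acts on $C(j)$ --- the knot-by-knot increments $x^{j+2}(r^{j+2}(s+1))$ together with the bound on intermediate rows force $e_{j+1}^{\epsilon_{j+1}-q_{j+1,k}}$ to strip exactly $x^{j+2}(\ell)$ from column $j+1$ at each knot row $\ell>i$ before finishing at row $i$ --- whereas in your proposal part (1) plays no role in the step and the posited ``raising-operator analogue of Lemma \ref{actionrow} pinning the action at row $i$'' does not exist. Two smaller points: for $i<r<k+1$ the inequality in part (1) is not a direct consequence of the maximality in \eqref{defr}; the paper needs a separate Dyck-path bound obtained by comparing with $f_{j+2}^{x^{j+2}(i)}R(j+3)$. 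And even in the base case the relevant statistic is $p_-^{t^Q}$ (the maximal row attaining the maximum), so the justification should quote Proposition \ref{propF}(a) and Remark \ref{xandr} to get $r^{t^Q+1}(1)=i$, as the paper does, rather than uniqueness of the maximum.
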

\begin{proof}
    Let $C(j)$ be of the given form for some $t^Q\le j< i-2.$ Then the $j,j+1$ and $j+2$ columns of $C(j)$ are given by
    \begin{align}\label{form1}
        \begin{array}{|c|c|c|}\hline
       j\text{-th} & (j+1)\text{-th} & (j+2)\text{-th} \\ \hline
        q_{j,k} & x^{j+2}(i) - \sum_{p=t^Q}^j q_{p,k} & x^{j+3}(i) - x^{j+2}(i) \\ \hline
        0 & 0 & 0 \\ \hline
        \vdots & \vdots & \vdots \\ \hline
        0 & 0 & 0 \\ \hline
        q_{j,k+1} & q_{j+1,k+1}+x^{j+2}(k+1) & q_{j+2,k+1}+x^{j+3}(k+1) - x^{j+2}(k+1) \\ \hline
        \vdots & \vdots & \vdots \\ \hline
        q_{j,n} & q_{j+1,n}+x^{j+2}(n) & q_{j+2,n}+x^{j+3}(n) - x^{j+2}(n) \\ \hline
    \end{array}
    \end{align}
    For $2\le s\le \ell^{j+2}$, the quantity $S^{C(j)}(j+1,r^{j+2}(s))$ is equal to 
    \begin{align}\label{form2}
       \nonumber &(x^{j+2}(i)-\sum_{p=t^Q}^j q_{p,k})+\sum_{p=k+1}^{r^{j+2}(s)} (q_{j+1,p}+x^{j+2}(p))+\sum_{p=r^{j+2}(s)}^n (q_{j+2,p}+x^{j+3}(p)-x^{j+2}(p))\\ 
        \nonumber&=-\sum_{p=t^Q}^j q_{p,k}+\sum_{p=k+1}^{r^{j+2}(s)} q_{j+1,p}+\sum_{p=r^{j+2}(s)}^n (q_{j+2,p}+x^{j+3}(p))+\sum_{p=i}^{r^{j+2}(s)}x^{j+2}(p)-\sum_{p=r^{j+2}(s)}^n x^{j+2}(p)\\
        \nonumber&=-\sum_{p=t^Q}^j q_{p,k}+S^{j+2}(r^{j+2}(s))+\sum_{p=i}^{r^{j+2}(s-1)}x^{j+2}(p)-\sum_{p=r^{j+2}(s+1)}^n x^{j+2}(p) \ (\text{using Eq.\eqref{polrs}})\\
        &=-\sum_{p=t^Q}^j q_{p,k}+m-\sum_{p=r^{j+2}(s+1)}^n x^{j+2}(p) \ \ (\text{using Eq.\eqref{recx}})
    \end{align}
    as required. Now for $s=1,$ we have $r^{j+2}(1)=i$ by proposition \ref{propF} and remark \ref{xandr}. Therefore, $S^{C(j)}(j+1,r^{j+2}(1))$ is equal to 
    \begin{align}\label{form3}
       \nonumber =& (x^{j+2}(i)-\sum_{p=t^Q}^j q_{p,k})+(x^{j+3}(i)-x^{j+2}(i))+\sum_{p=k+1}^n (q_{j+2,p}+x^{j+3}(p)-x^{j+2}(p))\\
        \nonumber=&-\sum_{p=t^Q}^j q_{p,k}+ x^{j+3}(i)+ \sum_{p=k+1}^n (q_{j+2,p}+x^{j+3}(p))-\sum_{p=k+1}^nx^{j+2}(p)\\
        \nonumber=&-\sum_{p=t^Q}^j q_{p,k}+S^{j+2}(i)-\sum_{p=k+1}^nx^{j+2}(p)\ \ \text{(using Eq.\eqref{polrs})}\\
        =&-\sum_{p=t^Q}^j q_{p,k}+m-\sum_{p=k+1}^nx^{j+2}(p)\ \ \text{(by lemma \ref{maxm})}.
    \end{align}
    which proves Eq\eqref{eqej} for $C(j).$ 
    \medskip
    
    Now assume that $r$ satisfies the given condition. Let $r\geq k+1$. Using a similar computation as in part (1) in the proof above, we obtain:
    \begin{align*}
        S^{C(j)}(j+1,r)&=-\sum_{p=t^Q}^j q_{p,k}+S^{j+2}(r)+\sum_{p=i}^{r^{j+2}(s)}x^{j+2}(p)-\sum_{p=r^{j+2}(s+1)}^{n}x^{j+2}(p)\\
        &\le-\sum_{p=t^Q}^j q_{p,k}+S^{j+2}(r^{j+2}(s+1))+\sum_{p=i}^{r^{j+2}(s)}x^{j+2}(p)-\sum_{p=r^{j+2}(s+1)}^{n}x^{j+2}(p)\\
        &=-\sum_{p=t^Q}^j q_{p,k}+m-\sum_{p=r^{j+2}(s+1)}^{n}x^{j+2}(p)\quad \ \text{ (by Eq.\eqref{recx})}\\
        &=S^{C(j)}(j+1,r^{j+2}(s))
    \end{align*}
    where the inequality is obtained by definition \ref{defr}. Now if $i<r<k+1$, a similar computation shows that $S^{C(j)}(j+1,r)$ is equal to
    $$S^{C(j)}(j+1,r)=-\sum_{p=t^Q}^j q_{p,k}+ x^{j+2}(i)+ \sum_{p=k+1}^n (q_{j+2,p}+x^{j+3}(p))-\sum_{p=r^{j+2}(2)}^nx^{j+2}(p).$$
    Now using \ref{polrs} and considering the element $f_{j+2}^{x^{j+2}(i)}R(j+3)$, we obtain that $$x^{j+2}(i)+ \sum_{p=k+1}^n (q_{j+2,p}+x^{j+3}(p))\le m,$$ which proves the second statement of part $(1)$ for the fixed $j.$ \medskip

    We now claim that $C(j+1)$ is of the required form. Let $1\le s< \ell^{j+2}.$ Note that by Eq.\eqref{eqej}, we have 
    $$S^{C(j)}(j+1,r^{j+2}(s))+x^{j+2}(r^{j+2}(s+1))=S^{C(j)}(j+1,r^{j+2}(s+1)).$$
    Now if $r^{j+2}(s)<r<r^{j+2}(s+1)$ then by part (1), we have $S^{C(j)}(j+1,r)\le S^{C(j)}(j+1,r^{j+2}(s))$ and thus 
    $$S^{C(j)}(j+1,r)+x^{j+2}(r^{j+2}(s+1))\le S^{C(j)}(j+1,r^{j+2}(s+1)).$$ 
    Now by figure \ref{figelei}, Eq.\eqref{polrs} and lemma \eqref{maxm} we have that (since $C(j)_{j+2,n}=0$) 
    $$\epsilon_{j+1}(C(j))=(m-\sum_{p=t^Q}^j q_{p,k})-(m-\sum_{p=i}^{n}x^{j+2}(p))=\sum_{p=i}^{n}x^{j+2}(p)-\sum_{p=t^Q}^j q_{p,k}\geq q_{j+1,k},$$ where the inequality is obtained by proposition \ref{propF}(a). Therefore, the resulting element $e_{j+1}^{\epsilon_{j+1} - q_{j+1,k}} C(j)$ is obtained by the successive application of $e_{j+1}$ along the rows
$$r^{j+2}(\ell^{s+2}),\ r^{j+2}(\ell^{s+2} - 1),\ \dots,\ r^{j+2}(2),\ r^{j+2}(1),$$
where the operator $e_{j+1}$ acts
$$x^{j+2}(r^{j+2}(\ell^{s+2})),\ x^{j+2}(r^{j+2}(\ell^{s+2} - 1)),\ \dots,\ x^{j+2}(r^{j+2}(2)),\ x^{j+2}(r^{j+2}(1)) - q_{j+1,k}$$
many times respectively. Now the claim follows from  figure \ref{figelei}.

Thus, we have proved that given any $t^Q\le j< i-2,$ and given that $C(j)$ is of the form \eqref{form0}, the conclusions in $(1)$ hold for $C(j).$ Moreover, $C(j+1)$ has the form \eqref{form0}. Now, by \eqref{polrs} and remark \ref{rem00}, we have that the element $C(t^Q)$ is of the from \eqref{form0}. Therefore, by induction on $j$, the proposition holds for all $t^Q\le j< i-2$ and the element $C(i-2)$ is of the form \eqref{form0}. Although $C(i-2)$ is not of the form \eqref{form1} (only differs at $(i,i)$ position), nevertheless, the same computation shows that \eqref{form2} and \eqref{form3} hold even for $C(i-2)$ and moreover $C(i-1)$ has the form \eqref{form0}. This completes the proof.
\end{proof}
\begin{cor}
    $C(i)$ is given by 
    $$C(i)_{s,t}=\begin{cases}
        q_{s,k} & \text{ if }  t=i,\\
        r_{s,t} & \text{ else}.
    \end{cases}
    $$
\end{cor}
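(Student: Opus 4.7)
The plan is to compute $C(i) = e_i^{\epsilon_i - q_{i,k}}(C(i-1))$ directly, using the explicit description of $C(i-1)$ produced by the preceding proposition specialized at $j = i-1$. That specialization of formula \eqref{form0} yields $C(i-1)_{s,\ell} = r_{s,\ell}$ for $s \le i-1$ and $\ell \ne i$; $C(i-1)_{s,i} = q_{s,k}$ for $s \le i-1$; and $C(i-1)_{i,i} = m - \sum_{p=k+1}^{n} r_{i,p} - x^i(i)$. For the remaining positions $(i,\ell)$ with $\ell \ne i$, the ``$s = j+1$, $\ell \ne i$'' branch of \eqref{form0} applies; with the convention $x^{i+1}(\ell) = 0$ (which is forced by the fact that no $R(i+1)$ is produced in the chain), it returns $r_{i,\ell}$. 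Thus $C(i-1)$ already agrees with the desired $C(i)$ at every position except possibly $(i,i)$.

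By the formula for $\epsilon_i$ in section \ref{secpoly}, one has $\epsilon_i(C(i-1)) = C(i-1)_{i,i}$, and each application of $e_i$ decreases $a_{i,i}$ by one while leaving all other entries untouched. Consequently $e_i^{\epsilon_i - q_{i,k}}$ replaces the $(i,i)$-entry of $C(i-1)$ by $q_{i,k}$ and fixes everything else, which yields exactly the formula for $C(i)$ claimed by the corollary.

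The only point deserving verification is that the exponent $\epsilon_i - q_{i,k}$ is a nonnegative integer, so that $e_i^{\epsilon_i - q_{i,k}}$ is defined. After substituting $r_{i,p} = q_{i,p}$ for $p > k$ and $r_{i,k} = 0$, this reduces to the inequality $x^i(i) + \sum_{p=k}^{n} q_{i,p} \le m$. Unwinding $x^i(i) = m - S^{R(i)}(i-1, r^i(2))$ and expressing $S^{R(i)}(i-1, r^i(2))$ in terms of entries of $Q$ via Eq.\eqref{polrs}, the inequality becomes a polytope defining inequality \eqref{defpoly} for $Q$ along the sub Dyck path that realizes the step path used to define $x^i(i)$. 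Identifying this sub Dyck path cleanly is the main (and essentially only) technical point; once it is pinned down, the bound is immediate, and I expect this bookkeeping to be routine though slightly tedious.
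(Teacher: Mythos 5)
Your overall strategy is the intended one: $C(i)=e_i^{\epsilon_i-q_{i,k}}(C(i-1))$, the function $\epsilon_i$ equals the $(i,i)$-entry, and $e_i$ only decreases that entry, so everything reduces to knowing $C(i-1)$ and checking $C(i-1)_{i,i}\ge q_{i,k}$. The gap is in how you specialize \eqref{form0} at $j=i-1$. The branch $(s,\ell)=(i,i)$ of \eqref{form0}, giving $m-\sum_{p=k+1}^n r_{i,p}-x^i(i)$, describes the $(i,i)$-entry only for $j\le i-2$, i.e.\ before $e_{i-1}^{\epsilon_{i-1}-q_{i-1,k}}$ has acted; that operator puts $x^i(i)-\sum_{p=t^Q}^{i-1}q_{p,k}$ units back into position $(i,i)$ (this is exactly the row-$i$ count in the proof of the proposition, cf.\ \eqref{form1}). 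Hence the true value is
$$C(i-1)_{i,i}=m-\sum_{p=k+1}^n q_{i,p}-\sum_{p=t^Q}^{i-1}q_{p,k},$$
which corresponds to the ``$s=j+1,\ \ell=i$'' branch with $x^{i+1}(i)$ read as $\varphi_i(R)=m-\sum_{p=k+1}^n r_{i,p}$, not to the $(i,i)$ branch you chose. (Your treatment of the remaining entries, $C(i-1)_{i,\ell}=r_{i,\ell}$ for $\ell\ne i$ via the convention $x^{i+1}(\ell)=0$, does land on the correct values.)

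This matters because the inequality you reduce well-definedness to, namely $x^i(i)+\sum_{p=k}^n q_{i,p}\le m$, is not a consequence of \eqref{defpoly} and is false in general, so no sub Dyck path can realize it. For instance take $n=3$, $i=2$, $m=2$ and $Q$ with $q_{1,2}=q_{2,2}=1$ and all other entries zero (so $k=2$, $t^Q=1$, $R=0$): then $R(2)$ has the single nonzero entry $2$ at position $(2,2)$, one computes $x^2(2)=2$, and $x^i(i)+\sum_{p\ge k}q_{i,p}=3>m$; yet $C(1)=f_1(R(2))$ has $(2,2)$-entry $1=q_{2,2}$ and the corollary holds. With the corrected value of $C(i-1)_{i,i}$, the condition $\epsilon_i(C(i-1))\ge q_{i,k}$ becomes $\sum_{p=1}^i q_{p,k}+\sum_{p=k+1}^n q_{i,p}\le m$, which is precisely a sub-Dyck-path bound for $Q$ in \eqref{defpoly} and holds automatically, with no unwinding of $x^i(i)$ required. (You should also treat the degenerate case $t^Q=i$ separately, where $E$ is empty and $C(i)=f_i^{q_{i,k}}(R)$ directly.)
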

\subsection{Action of the operators \texorpdfstring{$H$}{H}}
The following lemma finishes the proof of theorem \ref{thmpolytope}.
\begin{lem}\label{lemH}
    The operator $H$ maps $C(i)$ to $Q.$
\end{lem}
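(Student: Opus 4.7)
The plan is to prove the lemma by induction on $\ell$, showing that each factor $f_\ell^{\varphi_\ell}$ (for $\ell$ running from $i+1$ up to $k$) in the composition $H$ shifts the ``packet'' $(q_{1,k}, \dots, q_{i,k})$ down exactly one row. Concretely, set $D(i) := C(i)$ and $D(\ell) := f_\ell^{\varphi_\ell}(D(\ell-1))$ for $i < \ell \le k$. By the convention in Section~\ref{secpoly} we then have $H(C(i)) = D(k)$, so the goal is to prove $D(k) = Q$. I would maintain the invariant that $D(\ell)$ has row $\ell$ equal to $(q_{1,k}, \dots, q_{i,k})$, all rows indexed by $\{i, i+1, \dots, k\} \setminus \{\ell\}$ equal to zero, and rows $k+1, \dots, n$ unchanged from $Q$. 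The base case $\ell = i$ is exactly the description of $C(i)$ from the preceding corollary (using that rows $i+1, \dots, k-1$ of $R$ vanish because $k$ is the first nonzero row of $Q$), and the case $\ell = k$ yields $D(k) = Q$.

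The inductive step rests on the observation that for $\ell > i$, the defining formulas for $\varphi_\ell(A)$ and $p^\ell_+(A)$ involve only entries of $A$ in rows $\ell - 1$ and $\ell$. Hence $f_\ell^{\varphi_\ell}$ fixes every row other than $\ell - 1$ and $\ell$, and the inductive step reduces to the following two-row statement: if a matrix has row $\ell-1$ equal to $(a_1, \dots, a_i)$ with $a_j \ge 0$ and row $\ell$ equal to $(0, \dots, 0)$, then $\varphi_\ell$ equals $\sum_{j=1}^i a_j$, and after applying $f_\ell^{\varphi_\ell}$ row $\ell - 1$ becomes zero while row $\ell$ becomes $(a_1, \dots, a_i)$.

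To establish this two-row statement, the value of $\varphi_\ell$ is immediate: since $\sum_{j=1}^p a_{j,\ell-1} + \sum_{j=p}^i a_{j,\ell} = \sum_{j=1}^p a_j$ is non-decreasing in $p$, the minimum $p$ achieving its maximum is $p^\ell_+ = \max\{j : a_j > 0\}$, and the formula for $\varphi_\ell$ gives $\sum_j a_j$. To track the successive $f_\ell$-applications, I would prove the intermediate invariant that after $t$ steps the state reads
\[
\text{row } \ell - 1 :\ (a_1, \dots, a_{p-1}, a_p - s, 0, \dots, 0), \qquad \text{row } \ell :\ (0, \dots, 0, s, a_{p+1}, \dots, a_i),
\]
for some $1 \le p \le i$ and $0 \le s \le a_p$ with $t = s + \sum_{j > p} a_j$. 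A short case analysis on whether $s < a_p$ or $s = a_p$ shows that $p^\ell_+$ in this state equals $p$ in the former case and equals $\max\{j < p : a_j > 0\}$ in the latter; in each case one further $f_\ell$-application preserves the form with updated $p$ and $s$. After $\sum_j a_j$ steps the asserted final state is reached.

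The main obstacle is precisely this case analysis of how $p^\ell_+$ jumps when a column is exhausted and control passes to the next nonzero column to the left; the delicate point is that the min--max definition of $p^\ell_+$ must select the ``correct'' column in the presence of trailing zeros. Once this bookkeeping is settled, the two-row statement is established, and the outer induction on $\ell$ pushes the packet from row $i$ cleanly down to row $k$, completing the proof.
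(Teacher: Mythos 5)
Your proposal is correct and follows essentially the same route as the paper: the paper also factors $H$ into the successive operators $f_{i+1}^{\varphi_{i+1}},\dots,f_k^{\varphi_k}$ and argues that each one moves the packet $(q_{1,k},\dots,q_{i,k})$ down by one row (citing Figure~\ref{figflgri} and declaring this ``immediate''), which is exactly your outer induction. Your two-row invariant with the case analysis on $p^\ell_+$ simply supplies, correctly, the bookkeeping that the paper leaves implicit.
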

\begin{proof}
    By figure~\ref{figflgri}, it is immediate to see that $f_{i+1}^{\varphi_{i+1}} C(i)$ is the element where the $i$-th row is zero and the $(i+1)$-th row is the same as the $i$-th row of $C(i)$ and all other rows are unchanged. In other words, $f_{i+1}^{\varphi_{i+1}}$ moves the $i$-th row of $C(i)$ to the $(i+1)$-th row. A similar argument shows that the operator $H$ moves the $i$-th row of $C(i)$ to the $k$-th row, setting all rows above the $k$-th row to zero, while keeping all other rows of $C(i)$ unchanged. This is precisely $Q$. This completes the proof.
\end{proof}
Let $A$. Let $k$ be the minimum non-zero row index of $A$. Set $A^{(0)}=0,$ the highest weight element. For $ 1\le j\le n-k+1$, let $A^{(j)}$ be obtained from $A^{(j-1)}$ by replacing the $(n-j+1)$-th row of $A^{(j-1)}$ by the $(n-j+1)$-th row of $A.$  If $T^{(0)}\in \mathrm{SSYT}(m\omega_i)$ is the highest weight tableau, then we have the formula to determine the image of the map $\varphi_{i,m}$ in Eq.\eqref{cryisomap}.
\begin{cor}
    Then the map $\varphi_{i,m}$ in Eq.\eqref{cryisomap} is given by $$\varphi_{i,m}(A)=\prod_{p=1}^{n-k+1}\mathcal{K}^{A^{(n-k-p+2)}} (T^{(0)}).$$
\end{cor}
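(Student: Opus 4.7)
The plan is to deduce this corollary as a direct consequence of Theorem~\ref{thmpolytope} together with the crystal-theoretic properties of the affine isomorphism $\varphi_{i,m}$. First I would verify that the sequence $A^{(0)}, A^{(1)}, \ldots, A^{(n-k+1)}$ is precisely set up for iterated application of Theorem~\ref{thmpolytope}. By construction $A^{(0)}=0$, $A^{(n-k+1)}=A$, and for each $1\le j\le n-k+1$ the element $A^{(j-1)}$ coincides with $A^{(j)}$ except that its $(n-j+1)$-th row has been replaced by zero. Whenever this row of $A$ is nonzero it is the first nonzero row of $A^{(j)}$, so Theorem~\ref{thmpolytope} applies with $Q=A^{(j)}$ and $R=A^{(j-1)}$, yielding $\mathcal{K}^{A^{(j)}}(A^{(j-1)})=A^{(j)}$. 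The degenerate case where the $(n-j+1)$-th row of $A$ is zero is harmless: then $A^{(j)}=A^{(j-1)}$ and under the empty-product conventions of Section~\ref{indfor} the operator $\mathcal{K}^{A^{(j)}}$ acts as the identity on $A^{(j-1)}$, so the identity $\mathcal{K}^{A^{(j)}}(A^{(j-1)})=A^{(j)}$ still holds.

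Chaining these identities from $j=1$ up to $j=n-k+1$, and reindexing via $p=n-k-j+2$ so that $j$ decreases from $n-k+1$ to $1$ as $p$ increases from $1$ to $n-k+1$, we obtain
\begin{equation*}
\left(\prod_{p=1}^{n-k+1}\mathcal{K}^{A^{(n-k-p+2)}}\right)\!\bigl(A^{(0)}\bigr)\;=\;A^{(n-k+1)}\;=\;A,
\end{equation*}
which is exactly the conclusion of the intro theorem rephrased in the present indexing.

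Next I would invoke that $\varphi_{i,m}$ is an affine crystal isomorphism: it intertwines every $e_\ell$ and $f_\ell$ for $\ell\in\widehat I$, so it also intertwines every composite $\mathcal{K}^{A^{(j)}}$, which by definition is a finite word in these operators. Moreover $\varphi_{i,m}$ sends the unique classical highest weight element $A^{(0)}=0\in B^{i,m}$ (characterised by $e_\ell(\cdot)=0$ for all $\ell\in I$) to the unique such element $T^{(0)}\in\mathrm{SSYT}(m\omega_i)$. Applying $\varphi_{i,m}$ to the identity above then gives
\begin{equation*}
\varphi_{i,m}(A)\;=\;\left(\prod_{p=1}^{n-k+1}\mathcal{K}^{A^{(n-k-p+2)}}\right)\!\bigl(\varphi_{i,m}(A^{(0)})\bigr)\;=\;\left(\prod_{p=1}^{n-k+1}\mathcal{K}^{A^{(n-k-p+2)}}\right)\!\bigl(T^{(0)}\bigr),
\end{equation*}
which is the desired formula.

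The substantive content is entirely contained in Theorem~\ref{thmpolytope}; once that is granted, the corollary is obtained by transporting the construction across $\varphi_{i,m}$. The only real bookkeeping obstacle is keeping the two indexing conventions consistent (the bottom-up indexing $A^{(0)},\ldots,A^{(n-k+1)}$ of the corollary versus the top-down indexing $A^{(r^A)},\ldots,A^{(n)}$ of the intro theorem) and making sure the reader sees that the product in the corollary is the same ordered composition that appears on the left-hand side of the intro theorem.
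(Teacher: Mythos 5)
Your overall route is the intended one: the paper states this corollary without a separate proof, treating it as the immediate combination of Theorem~\ref{thmpolytope}, iterated along the chain $A^{(0)},A^{(1)},\dots,A^{(n-k+1)}$, with the fact that the affine isomorphism $\varphi_{i,m}$ of \eqref{cryisomap} commutes with all crystal operators (hence with each word $\mathcal{K}^{A^{(j)}}$) and sends the highest weight element of $B^{i,m}$ to $T^{(0)}$. Your main-case argument, the reindexing $p=n-k-j+2$, and the transport through $\varphi_{i,m}$ are all correct and match what the paper leaves implicit.

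The one step that does not hold up as written is your treatment of the degenerate case where the $(n-j+1)$-th row of $A$ is zero, so that $A^{(j)}=A^{(j-1)}$. You claim $\mathcal{K}^{A^{(j)}}$ then "acts as the identity under the empty-product conventions", but that is not what the definitions give: $\mathcal{K}^{Q}$ in Theorem~\ref{thmpolytope} is built from the \emph{first non-zero} row of $Q$ and its first non-zero entry, so when the added row is zero the operator is attached to a deeper row of $A^{(j)}$, its constituent products are nonempty, and it generally does not fix $A^{(j-1)}$. Concretely, for $i=1$, $n=3$, $m\ge 2$ and $A$ with column entries $(1,0,1)$, one has $A^{(2)}=A^{(1)}=(0,0,1)$, the literal definition gives $\mathcal{K}^{A^{(2)}}=f_3^{\varphi_3}f_2^{\varphi_2}f_1^{1}$, and this sends $(0,0,1)$ to $(0,0,2)$, not to itself. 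To make the step sound you should either restrict the composition to those $j$ whose added row is nonzero (Theorem~\ref{thmpolytope} applied to the surviving factors already produces $A$), or read the superscript as fixing the row index $n-j+1$ as in the introduction's formula and then actually verify that the word is the identity in this case: the leading exponent $g$ vanishes, the $E$-part must be shown to undo the $F$-part when the added row is zero (the paper's propositions are proved under the assumption that this row has a nonzero entry), and the $H$-part has vanishing exponents because the rows above $n-j+1$ of $A^{(j-1)}$ are zero. This wrinkle is glossed over in the paper as well, but the appeal to empty products does not close it.
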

\section{Explicit isomorphisms for the nodes near the ends of the Dynkin diagram}\label{lowcase}
In this section, we construct explicit crystal isomorphisms between the tableau and polytope realizations of the Kirillov–Reshetikhin crystal $KR^{i,m}$ in the cases $i \leq 2$ and $i \geq n-1$. Unlike in theorem \ref{thmpolytope}, where both raising and lowering operators are involved, here we construct a path in the crystal graph in the polytope realization from the highest weight vector to any vertex using lowering operators alone. 

For convenience of notation, we adopt the following convention which will be used throughout this section: for a square tableau $T$ of shape $(m^i)$, the $k$-th row can contain integers in the interval $[k,\ n-i+k+1]$.  We encode $T$ as a matrix $T_{st}$ of order $i\times (n+2-i)$ with $T_{st}$ denoting the multiplicity of $s+t-1$ in the $s$-th row of $T$. For example, we write the tableau (with $n=5$)
\begin{equation}\label{eq100}
    \begin{array}{|c|c|c|c|c|c|c|}\hline
    1 &1 &2 & 2&2 &4 &4 \\ \hline
    2 &3 &3 &3 &4 &5 &5 \\ \hline
    3&4 &4 &6 &6 &6 &6 \\\hline
\end{array}\qquad  \text{as}\qquad \begin{array}{|c|c|c|c|}
\hline
    2 &3 &0 &2 \\ \hline
    1&3 &1 &2 \\ \hline
    1&2 &0 &4 \\ \hline
\end{array}
\end{equation}
In many situations, it is often convenient to record only the nonzero columns of the matrix, whenever they determine the others. For instance, for large $n$, we write
$$\begin{array}{|c|c|c|c|}\hline
     1&1 &\dots &1 \\ \hline
     2&2 &\dots &2 \\ \hline
\end{array}\ \ \longleftrightarrow\ \ \begin{array}{|c|}\hline
         m  \\ \hline
         m \\\hline
    \end{array}=\ \begin{array}{|c|c|}\hline
         m& 0 \\ \hline
         m & 0\\\hline
    \end{array}
    =\ \begin{array}{|c|c|c|}\hline
         m& 0 & 0 \\ \hline
         m & 0 & 0\\\hline
    \end{array}\ = \ \cdots .$$ 
    Finally, in certain cases, we shall use the symbol $*$ to denote entries that are uniquely determined by the others. For instance, because each row of the matrix in \eqref{eq100} has total sum $7$, the entries in the first column are uniquely determined by this condition and may therefore be denoted by $*$.\medskip

    If $j$ is the largest entry in the tableau $T$, then $j$ must lie in the $i$-th row and the largest entry in the $i-k$-th row is at most $j-k$. Thus we have $T_{st}=0$ for all $t\geq j-i+2.$ We shall consider only the first $j-i+1$ columns. Note that for any $j \in [n]$, the entries $j$ and $j+1$ in the same column occur in consecutive rows, with $j+1$ appearing below. The signature rule in Section~\ref{seccrytab} can be described as follows: in each column that contains $j$ and $j+1$, pair every $j+1$ with the $j$ immediately above it.  Next, any remaining unpaired $j+1$ is paired with an unpaired $j$ in some column strictly to its right, provided no column that lies in between contains an unpaired $j$ or $j+1$. Finally, $f_j$ decreases the matrix entry of the unpaired $j$ with the smallest row index by 1; equivalently, it subtracts $1$ from the matrix entry of the unpaired $j$ in the most north–eastern position and $f_j$ adds $1$ to the matrix entry immediately right to it.
\subsection{The case \texorpdfstring{$i\le 2$}{i≤2}}\label{ssle2} Throughout this section we shall assume that $n$ is large (say $n\geq 4$) and $i\le 2.$ We begin with the following lemma, which associates to each $A \in B^{i,m}$ a path $P(i)^A$ in the crystal graph from the highest weight element, using only lowering crystal operators. Let $A\in B^{i,m}$. Define the operators for $2\le \ell \le n$,
$$X(i)_\ell^A:=\left(\prod_{j=i+1}^\ell f_{\ell+i+1-j}^{\sum_{p=1}^i a_{p,\ell}}\right)\left(\prod_{j=2}^i f_j^{\sum_{p=2}^j a_{p,\ell}}\right),\ \ G(i)^A:=\prod_{p=1}^i f_p^{\sum_{r=i}^n a_{1,r}}.$$

\begin{lem}\label{formule2}
    Let $i\le 2.$ With all the notations as above, the operator $$P(i)^A:=\left(\prod_{\ell=2}^n X(i)^A_\ell\right)G(i)^A$$ maps the highest weight element $A^{(0)}$ to $A$ in $B^{i,m}.$ 
\end{lem}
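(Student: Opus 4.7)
The plan is to check, step by step, that each operator in the factorisation $P(i)^A = \bigl(\prod_{\ell=2}^{n} X(i)^A_\ell\bigr) G(i)^A$ acts as required on the running state. Using the composition convention from section \ref{secpoly}, the operator $G(i)^A$ is applied first, then $X(i)^A_n$, then $X(i)^A_{n-1}$, and so on down to $X(i)^A_2$; within each $X(i)^A_\ell$ the innermost factor is the second product $\prod_{j=2}^{i} f_j^{\sum_{p=2}^{j}a_{p,\ell}}$. I would argue by backward induction on $\ell$ from $n$ down to $2$, maintaining the invariant that after $G(i)^A, X(i)^A_n, \ldots, X(i)^A_{\ell+1}$ have been applied, every row of index $r>\ell$ coincides with the corresponding row of $A$, every row of index $i<r\le\ell$ is zero, and the row of index $i$ has its first column carrying the leftover mass $M-\sum_{r>\ell} a_{1,r}$ (with all other entries zero).

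For $i=1$ the polytope is a single row and each $f_\ell$ with $\ell\ge 2$ unambiguously transfers one unit from column $\ell-1$ to column $\ell$. After $G(1)^A = f_1^M$ all mass $M=\sum_r a_{1,r}$ sits at column $1$, and each $X(1)^A_\ell$ relays exactly $a_{1,\ell}$ units along the chain $1\to 2\to\cdots\to\ell$ without touching columns of index $>\ell$; the required $\varphi$-values are automatic and the invariant is immediately preserved.

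For $i=2$ the key analyses are those of $G(2)^A=f_1^M f_2^M$ and of $X(2)^A_\ell$ for $\ell\ge 3$. For $G(2)^A$: $f_2^M$ builds $a_{2,2}=M$, after which $p^1_-=2$ throughout the action of $f_1^M$, producing $a_{1,2}=M$. Under the inductive hypothesis, applying $X(2)^A_\ell$ proceeds as follows: the factor $f_2^{a_{2,\ell}}$ places $a_{2,\ell}$ at position $(2,i)$, and each subsequent factor $f_r^{a_{1,\ell}+a_{2,\ell}}$ with $3\le r\le\ell$ acts in a \emph{two-phase} manner. A direct computation from the invariant shows the first $a_{2,\ell}$ applications of $f_r$ occur at column $p^r_+=2$ and move $a_{2,\ell}$ from row $r-1$ into row $r$; after this phase the $p=1$ and $p=2$ candidate sums defining $p^r_+$ coincide, the tie-break sends $p^r_+$ to $1$, and the remaining $a_{1,\ell}$ applications move $a_{1,\ell}$ from column $1$ of row $r-1$ into column $1$ of row $r$. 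Iterating this as $r$ grows from $3$ to $\ell$, row $\ell$ acquires its target values while the intermediate rows are emptied; rows $r>\ell$ are unaffected because $p^r_+$ sees only rows $r-1$ and $r$. The final factor $X(2)^A_2=f_2^{a_{2,2}}$ fills in the last entry.

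The main obstacle is the careful tracking of $p^r_+$ during each $X(2)^A_\ell$: one must verify the initial strict maximum at $p=2$ (when $a_{2,\ell}>0$), the exact moment the two candidate sums tie, and the stability of $p^r_+=1$ for the remainder of the factor. Each of these reduces to a straightforward but somewhat delicate bookkeeping using the invariant. A secondary check is that the relevant $\varphi$-values are always at least the required exponent (namely $\varphi_2\ge a_{2,\ell}$, $\varphi_r\ge a_{1,\ell}+a_{2,\ell}$ for $3\le r\le\ell$, and $\varphi_1,\varphi_2\ge M$ during $G(2)^A$). Each follows by exhibiting an appropriate Dyck path in $A$ whose sum realises the required bound and invoking the defining Dyck-path inequalities of $B^{2,m}$; in particular the estimate $M\le m$ comes from the path along the leftmost column of $A$ followed by the bottom row.
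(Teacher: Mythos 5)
Your proposal is correct and takes essentially the same route as the paper's proof: a backward induction over the index $\ell$ with the invariant that rows of index $>\ell$ already agree with $A$, the intermediate rows are zero, and the leftover mass $\sum_{r\le\ell}a_{1,r}$ sits in position $(1,i)$, with well-definedness of the exponents coming from sub-Dyck-path inequalities (this is exactly the paper's invariant \eqref{ind00}). Your explicit two-phase tracking of $p^r_+$ merely spells out the row-relaying step that the paper compresses into ``argument similar to lemma \ref{lemH}''.
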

\begin{proof}
    For $2\le k\le n,$ set $$R(i,k) =\left(\prod_{\ell=k}^n X(i)_\ell^A\right)G(i)^A(A^{(0)}).$$ It suffices to show, by backward induction on $k$, that $R(i,k)\in B^{i,m}$ is given by
 \begin{equation}\label{ind00}
     R(i,k)_{s,t} = \begin{cases}
        \sum_{j=2}^{k-1}a_{1,j} & \text{ if } (s,t)=(1,i),\\
        a_{s,t} & \text{ if } k \le t \le n, \\
        0 & \text{ otherwise}.
    \end{cases}
 \end{equation}
Since $\sum_{j=i}^n a_{1,j} \leq m$, Figure~\ref{figflei} shows that $W := G(i)^A (A^{(0)})$ is well-defined whose only nonzero entry lies in position $(1,i)$, with value $\sum_{j=i}^n a_{1,j}$.\smallskip

 \noindent\textbf{Base case $(k=n)$:} Note that we have $\sum_{j=i}^n a_{1,j}+\sum_{j=2}^i a_{j,n}\le m,$ since it is a sum of entries of a Dyck path. Hence $X_n:=\prod_{j=2}^i f_j^{\sum_{p=2}^j a_{p,n}}(W)$ is well-defined given explicitly by $$(X_n)_{s,t}=\begin{cases}
    a_{s,n} & \text{ if } t=i,\ 1<s\le i ,\\
     W_{s,t} &\text{ else}.
 \end{cases}$$
By the definition of the crystal operators in Section~\ref{secpoly} and argument similar to lemma \ref{lemH}, it follows immediately that $R(i,n)$ has the form in~\eqref{ind00}.\smallskip

\noindent\textbf{Induction Step:} Assume the statement holds for $k+1$ with some $2 < k < n$.  
 Again we have $\sum_{j=i}^{k}a_{1,j}+\sum_{j=2}^i a_{j,k}+\sum_{j=k+1}^n a_{i,j}\le m$, since it is a sum of entries of a sub-Dyck path, the element $X_k:=\prod_{j=2}^i f_j^{\sum_{p=2}^j a_{p,k}}(R(i,k+1))$ is well defined and is given by 
 $$(X_k)_{s,t}=\begin{cases}
    a_{s,t} & \text{ if } t=i,\ 1<s\le i ,\\
     R(i,k+1)_{s,t}&\text{ else}.
 \end{cases}$$
 Once again, by the definition of the crystal operators in Section~\ref{secpoly} and argument similar to lemma \ref{lemH}, we obtain that $R(i,k)$ has the required form~\eqref{ind00}. This completes the proof.
\end{proof}
\begin{rem}
    The formula in lemma \ref{formule2} does not hold if $i\geq 3$. Let $$ A=
        \begin{array}{|c|c|c|}\hline 
            0 & 1 & 1 \\ \hline 
            1 & 3 & 1 \\ \hline 
            1 & 3 & 4 \\ \hline 
        \end{array}.$$

        \noindent Then we have 

        \begin{tikzpicture}[every node/.style={inner sep=4pt, font=\footnotesize}]
    \node (v1) {$\begin{array}{|c|c|c|}\hline 
            0 & 0 & 0 \\ \hline 
            0 & 0 & 0 \\ \hline 
            0 & 0 & 0 \\ \hline 
        \end{array}$};
    \node[right=2cm of v1] (v2) {$\begin{array}{|c|c|c|}\hline 
            2 & 0 & 0 \\ \hline 
            0 & 0 & 0 \\ \hline 
            0 & 0 & 0 \\ \hline 
    \end{array}$};
    \node[right=2cm of v2] (v3) {$\begin{array}{|c|c|c|}\hline 
            2 & 3 & 4 \\ \hline 
            0 & 0 & 0 \\ \hline 
            0 & 0 & 0 \\ \hline 
        \end{array}$};
        \node[right=2cm of v3] (v4) {$ \begin{array}{|c|c|c|}\hline 
            1 & 0 & 0 \\ \hline 
            0 & 0 & 0 \\ \hline 
            1 & 3 & 4 \\ \hline 
        \end{array}$};
        \node[below=1cm of v4] (v5) {$ \begin{array}{|c|c|c|}\hline 
            1 & 1 & 3 \\ \hline 
            0 & 0 & 0 \\ \hline 
            1 & 5 & 2 \\ \hline 
        \end{array}$};
        \node[left=2cm of v5] (v6) {$\begin{array}{|c|c|c|}\hline 
            0 & 0 & 0 \\ \hline 
            1 & 1 & 3 \\ \hline 
            1 & 5 & 2 \\ \hline 
        \end{array}$};
        \node[left=2cm of v6] (v7) {$(A\neq)\ \ \begin{array}{|c|c|c|}\hline 
            0 & 0 & 2 \\ \hline 
            1 & 1 & 3 \\ \hline 
            1 & 6 & 1 \\ \hline
        \end{array}$};
    \draw[->] (v1) -- node[midway, above] {$f_1^2f_2^2f_3^2$} (v2);
    \draw[->] (v2) -- node[midway, above] {$f_2^3f_3^7$} (v3);
    \draw[->] (v3) -- node[midway, above] {$f_5^8f_4^8$} (v4);
    \draw[->] (v4) -- node[midway, right] {$f_2^3f_3^4$} (v5);
    \draw[->] (v5) -- node[midway, above] {$f_4^5$} (v6);
    \draw[->] (v6) -- node[midway, above] {$f_2f_3^2$} (v7);
\end{tikzpicture}
\end{rem}
Thus the map in $\varphi_{i,m}$ in \eqref{cryisomap} is also given by
\begin{equation}\label{mappsii}
    \varphi_{i,m}:B^{i,m}\to \mathrm{SSYT((m^i))},\quad A\mapsto P(i)^A(T^{(0)})
\end{equation}
In the rest of this section, we shall determine the explicit image of this map.
\noindent Given $A\in B^{i,m}$, set $$\Sigma_{n}^A := - a_{2,n},\quad \chi^A_{n}:=1$$ and for $2 \le k < n$ define recursively by downward induction $$\Sigma^A_{k} := a_{1,k+1} + (1-\chi^A_{k+1})\Sigma^A_{k+1} - a_{2,k},\ \text{ where } \chi^A_{k} := \begin{cases}1 & \text{if } \Sigma^A_{k} < 0,\\ 0 & \text{if } \Sigma^A_{k} \ge 0\end{cases}.$$
Note that if $i=1,$ then we have $\chi_k^A\Sigma_k^A=0$ for all $2\le k\le n.$ It is immediate by induction that for any $2\le k\le n$ we have 
\begin{equation}\label{essequa}
    \sum_{j=k}^n (a_{1,j}-a_{2,j})=a_{1,k}+\sum_{j=k}^n \chi_j^A\Sigma_j^A+(1-\chi_k^A)\Sigma_k^A.
\end{equation} 
The following theorem provides the explicit image of $\varphi_{i,m}$ defined in \eqref{mappsii}.
\begin{thm}
Fix $i\le 2$ and $A\in B^{i,m}$. For $2\le k\le n$ write $\Sigma_k:=\Sigma_k^A$ and $\chi_k:=\chi_k^A$. Then, for every $k=2,\dots,n$, the tableau $$T^{(k)}:=X(i)_k^AX(i)_{k+1}^A\cdots X(i)_n^AG(i)^A(T^{(0)})=(T^{(k)}_{st})$$ is given by 
$$T^{(k)}_{st}=\begin{cases}
        m-\sum_{j=i}^na_{1,j}& \text{ if } (s,t)=(i-1,1),\\
        \sum_{j=2}^{k} a_{1,j} + (1-\chi_{k})\Sigma_{k} & \text{ if } (s,t)=(i-1,2),\\
        m - \sum_{j=i}^{n} a_{1,j} + \sum_{j=k}^{n} \chi_j\Sigma_j & \text{ if } (s,t)=(i,1),\\
        \sum_{j=i}^{k-1} a_{1,j} & \text{ if } (s,t)=(i,2)\text{ and }k\geq 3,\\
        a_{2,t-1} + \chi_{t-1}\Sigma_{t-1} & \text{ if } s=i-1 \text{ and } k+1\le t \le n,\\
        a_{1,t+i-2} - \chi_{t+i-2}\Sigma_{t+i-2} & \text{ if } s=i \text{ and } k-i+2\le t\le n-i+2.
    \end{cases}$$
    In particular, the image of $A$ under the map $\Psi_i$ in Eq.\eqref{mappsii}(or \eqref{cryisomap}) is given by the tableau when $k=2.$
\end{thm}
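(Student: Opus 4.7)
The natural approach is backward induction on $k$ from $k=n$ down to $k=2$. For the base case, I would first compute the intermediate tableau $G(i)^A(T^{(0)})$ explicitly: for $i=1$ this changes the rightmost $c=\sum_{r=1}^n a_{1,r}$ entries of the single row to $2$'s, and for $i=2$ it produces $c=\sum_{r=2}^n a_{1,r}$ many $3$'s at the end of row two followed by $c$ many $2$'s at the end of row one. Applying $X(i)_n^A$ to this intermediate tableau, the signature rule is relatively transparent since only the last two columns are involved; verifying the formula in this case amounts to observing that at $j=n$ we have $\chi_n=1$ and $\Sigma_n=-a_{2,n}$, consistent with the claimed expressions for $T^{(n)}$.

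For the induction step, assuming $T^{(k+1)}$ is of the stated form, I would apply $X(i)_k^A$ as a composition of $f_j$ operators in the order dictated by the formula and analyze each application via the signature rule. The operators in $X(i)_k^A$ are engineered so that, reading right-to-left, the inner block $\prod_{j=2}^i f_j^{\sum_{p=2}^j a_{p,k}}$ first injects the column-$k$ data of $A$ into the leftmost columns of the tableau, after which the outer block $\prod_{j=i+1}^k f_{k+i+1-j}^{\sum_{p=1}^i a_{p,k}}$ propagates this adjustment rightward through the appropriate columns. The pairing between letters $j$ and $j+1$ only involves two adjacent columns of $T^{(k+1)}$, and the recursion $\Sigma_k=a_{1,k+1}+(1-\chi_{k+1})\Sigma_{k+1}-a_{2,k}$ precisely records the column-to-column discrepancy between the counts of these letters: $\chi_k=1$ corresponds exactly to the case where an unpaired $j+1$ survives in the relevant column and forces $f_j$ to act one position further to the left than it would otherwise, which accounts for the $\pm\chi_j\Sigma_j$ corrections appearing throughout the formula.

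A crucial algebraic ingredient is Eq.\eqref{essequa}, namely $\sum_{j=k}^n(a_{1,j}-a_{2,j}) = a_{1,k}+\sum_{j=k}^n \chi_j\Sigma_j+(1-\chi_k)\Sigma_k$, which guarantees that the row sums of the candidate $T^{(k)}$ equal $m$ and thus that all exponents appearing in $X(i)_k^A$ stay in the legal range (so no $f_j$ annihilates the element). I would verify this identity in parallel with the direct signature-rule computation, so that at each step the tableau produced is both well-defined and matches the claimed formula.

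The main obstacle will be the careful bookkeeping in the case $i=2$, where the interaction of the two rows produces several subcases according to whether $\chi_k$ and $\chi_{k+1}$ are $0$ or $1$. In each subcase one must verify that after applying $X(i)_k^A$ the unpaired $j$'s and $j+1$'s in the relevant columns occupy exactly the positions dictated by the formula; in particular, the entries $T^{(k)}_{1,2}$ and $T^{(k)}_{2,1}$ should change in opposite directions precisely when $\chi_k$ flips, reflecting the transfer of an unpaired letter across a column boundary. By contrast, the case $i=1$ is considerably simpler since $\chi_j\Sigma_j\equiv 0$, so the formula collapses and the induction reduces to the elementary behaviour of the single-row signature rule.
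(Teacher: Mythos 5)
Your proposal takes essentially the same route as the paper's proof: dispose of $i=1$ by direct computation (where $\chi_j\Sigma_j\equiv 0$), then for $i=2$ run backward induction on $k$ with base case $k=n$ obtained by computing $G(i)^A(T^{(0)})$ and $X(i)_n^A$ explicitly, and carry out the induction step by applying the operators of $X(i)_k^A$ one at a time via the signature rule, with the $\chi_j\Sigma_j$ corrections emerging from the case analysis. One small correction: the identity \eqref{essequa} is not needed to make the row sums equal $m$ (that is automatic for a tableau of shape $(m^i)$); in the paper it is used to show the number of unpaired $2$'s in $T^{(k)}$ is at least $a_{2,k-1}$, i.e.\ that the exponents are admissible and no lowering operator annihilates the element — which is the same purpose you intend for it.
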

\begin{proof}
A direct computation for $i=1$ case shows that 
$$T^{(k)}_{1,t}=\begin{cases}
    m-\sum_{j=1}^n a_{1,j} & \text{ if } t=1,\\
    \sum_{j=1}^{k-1} a_{1,j} & \text{ if } t=2,\\
    a_{1,t-1} & \text{ if } k+1\le t\le n+1,\\
    0 & \text{ else.}
\end{cases}$$ which proves the theorem for $i=1.$ So we can assume that $i=2.$ Throughout the proof, we use the signature rule described in Section~\ref{seccrytab} without further mention. We proceed by backward induction on $k$. For the base case $k = n$, observe that
    \begin{equation}\label{eq00}
        \begin{array}{|c|}\hline
         m  \\ \hline
         m \\\hline
    \end{array}\ \ 
    \xmapsto{f_1^{\sum_{j=2}^na_{1j}}\circ f_2^{\sum_{j=2}^na_{1j}}}\ \
    \begin{array}{|c|c|}\hline
         m - \sum_{j=2}^na_{1j}& \sum_{j=2}^na_{1j}\\ \hline
         m - \sum_{j=2}^na_{1j}& \sum_{j=2}^na_{1j} \\\hline
    \end{array}\ .
    \end{equation}
    Now, since $\sum_{j=2}^n a_{1j} + a_{2n} \leq m$, we can apply $f_2^{a_{2n}}$ to the tableau on the right-hand side of Equation~\eqref{eq00}, obtaining:
    $$\begin{array}{|c|c|}\hline
         *& \sum_{j=2}^na_{1j}\\ \hline
        *& \sum_{j=2}^na_{1j}+a_{2n} \\\hline
    \end{array}\ .$$
    Now a straightforward computation shows that
    $$\begin{array}{|c|c|}\hline
         *& \sum_{j=2}^na_{1j}\\ \hline
        *& \sum_{j=2}^na_{1j}+a_{2n} \\\hline
    \end{array}\ \xmapsto{\prod_{j=3}^{n} f_{n+3-j}^{a_{1,n}+a_{2,n}}}\ \begin{array}{|c|c|c|c|c|}\hline
        *& \sum_{j=2}^na_{1j} & 0 & \dots& 0\\ \hline
         *& \sum_{j=2}^{n-1}a_{1j} &0 &\dots &a_{1,n}+a_{2,n}\\\hline
    \end{array}\ ,$$ as desired.\medskip

    Assume now that $T^{(k)}$ is of the given form for some $2<k\le n$.
    Define $T^{(k-1,2)}=f_2^{a_{2,k-1}}T^{(k)}$. Since the number of unpaired $2$ in the first row is $a_{1,k}+(1-\chi_{k})\Sigma_{k}$, the total number of unpaired $2$ in $T^{(k)}$ is $T^{(k)}_{2,{\color{blue}1}}+a_{1,k}+(1-\chi_{k})\Sigma_{k}$ and by Eq.\eqref{essequa} we obtain that
     $$m-\sum_{j=2}^n
a_{1,j}+\sum_{j=k}^n \chi_j\Sigma_j+a_{1,k}+(1-\chi_k)\Sigma_k=m-\sum_{j=2}^{k-1} a_{1,j}-\sum_{j=k}^n a_{2,j}\geq a_{2,k-1}.$$
     Therefore $T^{(k-1,2)}$ is a well defined tableau. It is now immediate that 
     $$T^{(k-1,2)}_{st}=\begin{cases}
         T^{(k)}_{s+1,t} +(1-\chi_{k-1})\Sigma_{k-1} & \text{ if } (s,t)=(1,2),\\
         T^{(k)}_{st}+a_{2,k-1}+\chi_{k-1} \Sigma_{k-1} & \text{ if } (s,t)=(1,3),\\
         T^{(k)}_{st}+\chi_{k-1} \Sigma_{k-1} & \text{ if } (s,t)=(2,1),\\
         T^{(k)}_{st}-\chi_{k-1} \Sigma_{k-1} & \text{ if } (s,t)=(2,2),\\
         T^{(k)}_{st} & \text{ else}.
     \end{cases}$$
     Since $k\geq 3$ and $a_{2,k-1}+\chi_{k}\Sigma_{k}\le a_{2,k-1}$ again using signature rule we have that $T^{(k-1,3)}=f_{3}^{a_{1,k-1}+a_{2,k-1}}T^{(k-1,2)}$ is given by 
    $$T^{(k-1,3)}_{st}=\begin{cases}
           T^{(k-1,2)}_{st}+a_{2,k-1}+\chi_{k-1} \Sigma_{k-1} & \text{ if } (s,t)=(1,4),\\
         T^{(k-1,2)}_{st} -  a_{2,k-1} - \chi_{k-1}\Sigma_{k-1} & \text{ if } (s,t)=(1,3),\\
         T^{(k-1,2)}_{st} + a_{1,k-1}-\chi_{k-1} \Sigma_{k-1}& \text{ if } (s,t)=(2,3),\\
         T^{(k-1,2)}_{st} - a_{1,k-1} +\chi_{k-1} \Sigma_{k-1}& \text{ if } (s,t)=(2,2),\\
         T^{(k-1,2)}_{st} & \text{ else}.
    \end{cases}$$
    It is now immediate that the tableau $T^{(k-1)}_{st}=\prod_{j=3}^{k-1} f_{k-1+3-j}^{a_{1,k-1}+a_{2,k-1}}f_2^{a_{2,k-1}} (T^{(k)})$ is given by (noting the fact that $T^{(k)}_{1,3}=0$)
    $$T^{(k-1)}_{s,t}=\begin{cases}
         T^{(k)}_{s+1,t} +(1-\chi_{k-1})\Sigma_{k-1}  & \text{ if } (s,t)=(1,2),\\
         T^{(k)}_{s,t}+a_{2,k-1}+\chi_{k-1} \Sigma_{k-1}  & \text{ if } (s,t)=(1,k),\\
          T^{(k)}_{s,t}+\chi_{k-1} \Sigma_{k-1} & \text{ if } (s,t)=(2,1),\\
         T^{(k)}_{s,t}-a_{1,k-1} & \text{ if } (s,t)=(2,2),\\
         T^{(k)}_{s,t} + a_{1,k-1}-\chi_{k-1} \Sigma_{k-1}  &\text{ if } (s,t)=(2,k-1),\\
         T^{(k)}_{s,t}  &\text{ else.}
    \end{cases}$$
    The proof is now complete by induction.
\end{proof}
Although we shall not use it anywhere, it is interesting to observe that the following holds for $3 \le k \le n$:
\[
\min\{T^{(2)}_{r,k} : 1\le r\le 2\}  = \min\{a_{s,t} \in A : s + t = k + 1\}.
\]
We suspect that this is true in general, i.e., if $A\in B^{i,m}$ and $T=\varphi_{i,m}(A)$ (see Eq.\eqref{cryisomap}) is written in matrix notation as in the beginning of this section, then for $i+1 \le k \le n-i+2$, we have
\[
\min\{T_{r,k} : 1\le r\le i\}  = \min\{a_{s,t} \in A : s + t = k + 1\}.
\]
\subsection{The case \texorpdfstring{$i\geq n-1$}{i≥n−1.}} 
Throughout this subsection, we shall assume that $n$ is large (say $n\geq 4$) and $i$ satisfies $i\geq n-1.$ Since for any $i\in [n]$ the crystals $KR^{i,m}$ and $KR^{n-i+1,m}$ are dual of each other, the formulae for this subsection can be obtained by dualizing the formulae in section \ref{ssle2}. The following lemma is the dual version of the lemma \ref{formule2}. Let $A\in B^{i,m}$ be a arbitrary. Recall the convention of indexing rows and columns of $A.$ We have the following analogous lemma  which we state without proof. For $A\in B^{i,m}$ for $1\le \ell<n$, we define operators
$$Y(i)_\ell^A = \left(\prod_{j=\ell}^{i-1}f_j^{\sum_{p=i}^na_{\ell,p}}\right)\left(\prod_{j=i}^{n-1}f_{n-1+i-j}^{\sum_{p=n-1+i-j}^{n-1}a_{\ell,p}}\right),\ \ H(i)^A = \prod_{j=i}^n f_{n+i-j}^{\sum_{p=1}^i a_{p,n}}$$
\begin{lem}
    Let $i\geq n-1.$ With all the notations as above, the operator $$Q(i)^A:=\left(\prod_{\ell=1}^{n-1} Y(i)_{n-\ell}^A\right)H(i)^A$$ maps the highest weight element $A^{(0)}$ to $A$ in $B^{i,m}.$\qed
\end{lem}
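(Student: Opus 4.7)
The plan is to proceed by forward induction in parallel with the proof of Lemma~\ref{formule2}, with the roles of rows and columns interchanged. For $0 \le k \le n-1$, set
$$S(i,k) := Y(i)_k^A \, Y(i)_{k-1}^A \cdots Y(i)_1^A \, H(i)^A(A^{(0)}),$$
with $S(i,0) := H(i)^A(A^{(0)})$. I claim that
$$S(i,k)_{s,t} =
\begin{cases} a_{s,t} & \text{if } 1 \le s \le k, \\
\sum_{p=k+1}^{i} a_{p,n} & \text{if } (s,t) = (i, n) \text{ and } k < i, \\
0 & \text{otherwise.} \end{cases}$$
This yields $S(i, n-1) = A$: for $i = n-1$ the first case exhausts all $s \in \{1, \dots, i\}$, and for $i = n$ we have $k = n-1 = i-1$ so the $(i,n)$-entry equals $\sum_{p=i}^i a_{p,n} = a_{i,n}$.

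For the base case $k = 0$, write $M := \sum_{p=1}^i a_{p,n}$. The rightmost factor $f_i^M$ of $H(i)^A$ places $M$ at position $(i,i)$ by the $\ell = i$ rule, and each subsequent $f_\ell^M$ with $\ell = i+1, \dots, n$ slides this mass one column to the right: at each stage the only nonzero entry lies in row $i$, so $p_+^\ell = i$ and $\varphi_\ell = M$, and the operator is well-defined since $M \le m$. The final state has $M$ at $(i,n)$ and zeros elsewhere.

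For the inductive step, assume $S(i,k-1)$ satisfies the claim. Since $i \ge n-1$, the inner product $\prod_{j=i}^{n-1} f_{n-1+i-j}^{\sum_{p=n-1+i-j}^{n-1} a_{k,p}}$ in $Y(i)_k^A$ has at most one factor: it is empty when $i = n$, and equals $f_i^{a_{k,i}}$ when $i = n-1$, placing $a_{k,i}$ at position $(i,i)$. The outer product then applies $f_{i-1}, f_{i-2}, \dots, f_k$ (rightmost first), each with exponent $\sum_{p=i}^n a_{k,p}$. At each stage the structure of the intermediate state pins down the statistic $p_-^\ell$: initially $p_-^\ell$ lies at row $i$ (where the newly placed mass $a_{k,i}$ resides), so the first $a_{k,i}$ applications of $f_\ell$ transport this entry one column to the left; once this row-$i$ contribution is exhausted, $p_-^\ell$ jumps to row $n$ (with a tie) and the remaining $a_{k,n}$ applications transport the corresponding portion of the $(i,n)$-entry leftward. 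For $i = n$ this splitting is trivial, since there is only one row. The net effect is that column $k$ of $S(i,k)$ is populated with $a_{k,i}, a_{k,i+1}, \dots, a_{k,n}$, while the $(i,n)$-entry decreases by $a_{k,n}$, yielding the claimed form.

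The main obstacle is the careful verification, at each application of $f_\ell$, that (i) $\varphi_\ell$ is at least as large as the prescribed exponent, and (ii) $p_-^\ell$ lies at the expected row throughout the process. Both checks reduce, as in Lemma~\ref{formule2}, to Dyck path inequalities for $A$, and proceed by direct computation of partial column sums in the intermediate state. Once these are in place the induction closes, and the lemma follows.
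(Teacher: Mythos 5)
Your proposal is correct, but it takes a genuinely different route from the paper: for this lemma the paper gives no direct argument at all — it invokes the duality between $KR^{i,m}$ and $KR^{n-i+1,m}$ and states the result without proof as the dual of lemma \ref{formule2} — whereas you re-run the induction of lemma \ref{formule2} inside $B^{i,m}$ itself, building $A$ one column at a time. Your base case is right ($H(i)^A$ first creates $M=\sum_{p=1}^{i}a_{p,n}$ at position $(i,i)$ and then transports it to $(i,n)$, with $p_+^\ell=i$ and $\varphi_\ell=M$ at every stage), and your inductive step correctly isolates the only delicate point: when the outer factors $f_{i-1},\dots,f_k$ act on a column carrying $(a_{k,i},N)$ with $N\ge a_{k,n}$, the first $a_{k,i}$ applications act at row $i$, after which the two competing partial sums tie, and since $p_-^\ell$ is by definition the \emph{maximal} index attaining the maximum it jumps to row $n$, so the remaining $a_{k,n}$ applications move exactly the right portion of the $(i,n)$-mass leftward; because $i\ge n-1$ there are only two rows, so this two-phase analysis closes the induction. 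The verifications you defer — that $\varphi_\ell$ dominates each prescribed exponent, e.g. $\sum_{j=1}^{k}a_{j,n-1}+\sum_{p=k}^{i}a_{p,n}\le m$ for the inner factor $f_i^{a_{k,i}}$ when $i=n-1$ — are indeed routine sub-Dyck-path inequalities of exactly the kind used in lemma \ref{formule2}, so nothing essential is missing, and the level of detail is comparable to the paper's own proofs. What each approach buys: the paper's duality reduction is shorter but leaves implicit the translation of operators, compositions and highest/lowest weight elements through the duality, while your argument is self-contained and actually supplies a proof where the paper only asserts one. One cosmetic remark: you call the first index a row, whereas in the paper's convention it is the column index (so $f_\ell$ with $\ell<i$ moves mass from column $\ell+1$ to column $\ell$, and $H(i)^A$ slides the mass down the last column); your usage is internally consistent, so this does not affect correctness, but it should be aligned with the paper's conventions if written up.
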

Now we set $\Omega_1^A = -a_{1,n-1}$ and $\nu_1^A = 1$. Define by induction for $2\le k\le n-1$
\begin{equation}\label{defOmega}
    \Omega_k^A = a_{k-1,n} + (1-\nu_{k-1}^A)\Omega_{k-1}^A - a_{k,n-1} \quad \text{and}\quad\nu_k^A = 
\begin{cases}
    1 & \text{if } \Omega_{k}^A < 0,\\ 
    0 & \text{if } \Omega_{k}^A \ge 0.
\end{cases}
\end{equation}
It is immediate by induction that for any $2\le k \le n-1$ we have
\begin{equation}\label{essequa1}
    \sum_{j=1}^{k-1} (a_{j,n}-a_{j,n-1}) = a_{k,n-1} + \sum_{j=1}^{k-1} \nu_j\Omega_j + \Omega_k
\end{equation}
The following is the main result of this subsection which determines the image of an arbitrary element under the map \eqref{cryisomap}. For $1\le k < n,$ define tableau $$T(i)^{(k),A}:=Y(i)_{k}^AY(i)_{k-1}^A\dots Y(i)_1^AH(i)^A(T^{(0)})\in \mathrm{SSYT}(m\omega_i).$$ Note that since $i\geq n-1,$ a tableau in $\mathrm{SSYT}(m\omega_i)$ can contain at most three distinct entries in each row. For $1\le s\le i$, we define the $s$-th row of $T(i)^{(k),A}$ by the tuple $$T(i)^{(k),A}_s:=(T(i)^{(k),A}_{s,1},T(i)^{(k),A}_{s,2},T(i)^{(k),A}_{s,3}).$$
A direct computation for $i=n$ case shows that
    \[
    T^{(k)}_{s,t} = 
    \begin{cases}
         m - \sum_{j=1}^s a_{j,n} &\text{if } t=1 \text{ and } 1\le s\le k,\\
         \sum_{j=1}^s a_{j,n} &\text{if } t=2 \text{ and } 1\le s\le k,\\
         m - \sum_{j=1}^k a_{j,n} &\text{if } t=1 \text{ and } k+1\le s\le n-1,\\
         \sum_{j=1}^k a_{j,n} &\text{if } t=2 \text{ and } k+1\le s\le n-1,\\
         m - \sum_{j=1}^n a_{j,n} &\text{if } (s,t)=(n,1),\\
         \sum_{j=1}^n a_{j,n} &\text{if } (s,t)=(n,2).
    \end{cases}
    \]
\begin{thm}
Let $i=n-1$ and let $A\in B^{i,m}$. Set $\Omega_k^A=\Omega_k,\ \nu_k^A=\nu_k$ for $1\le k\le n-1$ and $T^{(k)}=T(i)^{(k),A}$ for $1\le k< n.$ Then 
\begin{itemize}
    \item for $1\le s\le k-1$, the tuple $T^{(k)}_s$ is given by
    \[
    \left(m - \sum_{j=1}^s (a_{j,n} - \nu_j\Omega_j), -\sum_{j=1}^s\nu_j\Omega_j + (1-\nu_{s+1})\Omega_{s+1}, \sum_{j=2}^{s+1} (a_{j,n-1} + \nu_j\Omega_j)\right).
    \]
    \item for $k\le s\le i-1$, the tuple $T^{(k)}_s$ is given by
    \[
    \left(m - \sum_{j=1}^k (a_{j,n} - \nu_j\Omega_j), -\sum_{j=1}^k \nu_j\Omega_j + a_{k,n} + (1-\nu_{k})\Omega_{k}, \sum_{j=2}^{k} (a_{j,n-1} + \nu_j\Omega_j)\right).
    \]
    \item for $s=i$, the tuple $T^{(k)}_s$ is given by
    $$\left(m - \sum_{j=1}^{i} a_{j,n} + \sum_{j=1}^k \nu_j\Omega_j, -\sum_{j=1}^k\nu_j\Omega_j, \sum_{j=1}^{i} a_{j,n}\right).$$
\end{itemize}
In particular, the image of $A$ under the map $\Psi_i$ in Eq.\eqref{cryisomap} is given by the tableau when $k=n-1.$
\end{thm}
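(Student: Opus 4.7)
\emph{Proof strategy.} The plan is to mirror the argument of Subsection~\ref{ssle2}, using forward induction on $k$ from $1$ to $n-1$, since here $T^{(k)}=Y(i)_k^A(T^{(k-1)})$ with $T^{(0)}$ the highest weight tableau and $T^{(1)}=Y(i)_1^A H(i)^A(T^{(0)})$. For $i=n$ the formulas reduce to the direct computation already stated in the excerpt; the substantive case is $i=n-1$, on which I focus.

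For the base case $k=1$, unpack $H(n-1)^A=f_n^S f_{n-1}^S$ with $S=\sum_{p=1}^{n-1}a_{p,n}$. Applying these in sequence to the highest weight tableau via the signature rule of Section~\ref{seccrytab} populates the last columns of rows $n-2$ and $n-1$ with the appropriate counts. Then I unpack $Y(n-1)_1^A=\left(\prod_{j=1}^{n-2}f_j^{a_{1,n-1}+a_{1,n}}\right)f_{n-1}^{a_{1,n-1}}$: the rightmost factor $f_{n-1}^{a_{1,n-1}}$ acts on row $n-1$, and the outer cascade shifts entries upward column by column. Since $\Omega_1=-a_{1,n-1}$ and $\nu_1=1$, a direct verification matches the stated tuples for $T^{(1)}$.

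For the inductive step, assume the formula for $T^{(k-1)}$. To compute $T^{(k)}=Y(n-1)_k^A(T^{(k-1)})$, I would first apply $f_{n-1}^{a_{k,n-1}}$ and then $\prod_{j=k}^{n-2}f_j^{a_{k,n-1}+a_{k,n}}$. The critical step is $f_{n-1}^{a_{k,n-1}}$: the signature rule examines the $(n-1)$'s and $n$'s in rows $n-3,n-2,n-1$, and the $(1-\nu_{k-1})\Omega_{k-1}$ term recorded in row $k-1$ of $T^{(k-1)}$ determines whether the $a_{k,n-1}$ lowering operations are absorbed entirely in row $k$ or overflow into row $k-1$. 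The threshold between the two cases is exactly the sign of $\Omega_k$, which is the algebraic content of the recursion \eqref{defOmega}. The subsequent cascade $\prod_{j=k}^{n-2}f_j^{a_{k,n-1}+a_{k,n}}$ shifts the newly created entries leftward, affecting only row $k$ by the signature rule (rows above are already fully paired by the inductive hypothesis).

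The main obstacle is the careful bookkeeping needed to verify that the cumulative term $\sum_{j=1}^{k}\nu_j\Omega_j$ correctly tracks the aggregated overflow from all previous iterations; Equation~\eqref{essequa1} serves as the key invariant that makes the formulas consistent with the count of paired versus unpaired entries. A secondary subtlety is the boundary row $s=i$, which absorbs any residual overflow that cannot be propagated upward and therefore follows a slightly different formula that must be checked separately. Alternatively, one could derive the theorem from the $i\le 2$ case via the crystal duality $KR^{n-1,m}\cong(KR^{2,m})^\vee$ alluded to at the start of the subsection, though the direct induction outlined above follows the pattern set in Subsection~\ref{ssle2} and appears to be the natural approach.
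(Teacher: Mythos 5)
Your overall skeleton is the paper's: forward induction on $k$, unpacking $H(n-1)^A=f_n^Sf_{n-1}^S$ and $Y(n-1)_k^A=\bigl(\prod_{j=k}^{n-2}f_j^{a_{k,n-1}+a_{k,n}}\bigr)f_{n-1}^{a_{k,n-1}}$, the signature rule, Eq.~\eqref{essequa1} as the well-definedness estimate, and the sign of $\Omega_k$ as the absorption/overflow threshold. However, the mechanics you describe for the inductive step are not what happens, and carried out literally they would not reproduce the stated formulas. First, the split at the step $f_{n-1}^{a_{k,n-1}}$ is not between rows $k$ and $k-1$: the letters $n-1$ and $n$ live only in the bottom rows of the tableau, so $f_{n-1}$ distributes its applications between row $n-2$ (which absorbs $a_{k,n-1}+\nu_k\Omega_k$ of them, its unpaired $(n-1)$'s numbering $a_{k-1,n}+(1-\nu_{k-1})\Omega_{k-1}$ after pairing with the $n$'s of row $n-3$) and row $n-1$ (which absorbs the overflow $-\nu_k\Omega_k$). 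The quantity $(1-\nu_{k-1})\Omega_{k-1}$ is indeed recorded in the inductive formula, but in the second component of all middle rows $k-1\le s\le n-2$, and the pairing computation that decides the threshold takes place in rows $n-3,n-2,n-1$, not near row $k$.

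Second, the claim that the cascade $\prod_{j=k}^{n-2}f_j^{a_{k,n-1}+a_{k,n}}$ ``affects only row $k$'' because the rows above are fully paired is false, and this is not a cosmetic point: comparing the statement for $T^{(k-1)}$ and $T^{(k)}$, every row from $k-1$ through $n-1$ changes. In the actual computation each $f_j$ ($k\le j\le n-2$) acts on the two consecutive rows $j-1$ and $j$, splitting its $a_{k,n-1}+a_{k,n}$ applications as $a_{k,n-1}+\nu_k\Omega_k$ on the upper row and $a_{k,n}-\nu_k\Omega_k$ on the lower row; it is precisely this repeated split, propagated upward from row $n-2$ to row $k$, that deposits the $\nu_j\Omega_j$ corrections appearing in the first and third components of the middle-row tuples and turns row $k-1$ into its final form with second entry $-\sum_{j=1}^{k-1}\nu_j\Omega_j+(1-\nu_k)\Omega_k$. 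If the cascade only modified row $k$, the induction could not close. So the high-level plan is sound and matches the paper, but the proposal as written has a genuine gap in the row-level bookkeeping of both the $f_{n-1}$ step and the cascade, which is exactly the content of the inductive step.
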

\begin{proof}
    Throughout the proof, we use the matrix notation for the tableau and the signature rule  described in beginning of this section without further mention. We proceed by induction on $k.$ Since $\sum_{p=1}^{n-1} a_{p,n}\le m,$ by applying the signature rule, it is immediate that $T^{(1,n)}:=f_{n}^{\sum_{p=1}^{n-1}a_{p,n}}f_{n-1}^{\sum_{p=1}^{n-1}a_{p,n}}(T^{(0)})$ is given by
    $$T^{(1,n)}_{s,t}=\begin{cases}
        m & \text{ if } 1\le s\le n-2,\ t=1,\\
        m- \sum_{p=1}^{n-1}a_{p,n}& \text{ if } (s,t)=(n-1,1),\\
        \sum_{p=1}^{n-1}a_{p,n}& \text{ if } (s,t)=(n-1,3),\\
        0 & \text{ else}.
    \end{cases}$$
    Now since $\sum_{p=1}^{n-1}a_{p,n}+ a_{1,n-1}$ is the sum of entries along a Dyck path, we have  $m- \sum_{p=1}^{n-1}a_{p,n}\geq a_{1,n-1}$. It follows that $T^{(1,n-1)}:=f_{n-1}^{a_{1,n-1}}T^{(1,n)}$ is a well defined tableau given by:
    $$T^{(1,n-1)}_{s,t}=\begin{cases}
        T^{(1,n)}_{s,t}-a_{1,n-1} & \text{ if } (s,t)=(n-1,1),\\
        T^{(1,n)}_{s,t}+a_{1,n-1} & \text{ if } (s,t)=(n-1,2),\\
        T^{(1,n)}_{s,t} & \text{ else}.
    \end{cases}$$
    Now note that the number of unpaired $n-2$ in the $(n-2)$-th row is $\sum_{p=1}^{n-1} a_{p.n}+a_{1,n-1}$ Thus $f_{n-1}^{a_{1,n-1}+a_{1,n}}(T^{(1,n-1)}).$  Now it is immediate that the tableau $$T^{(1)}=f_1^{a_{1,n-1}+a_{1,n}}\dots f_{n-2}^{a_{1,n-1}+a_{1,n}}(T^{(1,n-1)})$$ is of the given form. This completes the proof of the base step.

    Assume now that $T^{(k)}$ is of the given form for some $1\le k < n-1$. Define $T^{(k+1,n-1)}=f_{n-1}^{a_{k+1,n-1}}T^{(k)}$. Since each $n$ in the $(n-2)$-th row is paired with each $(n-1)$ in the $(n-3)$-th row, the number of unpaired $n-1$ is the $(n-2)$-th row is $a_{k,n} + (1-\nu_k)\Omega_k$. Thus the total number of unpaired $n-1$ in $T^{(k)}$ is $T^{(k)}_{n-1,1} + a_{k,n} + (1-\nu_k)\Omega_k$. Using Eq.\eqref{essequa1} we obtain that
    \[
     m - \sum_{j=1}^{n-1} a_{j,n} + \sum_{j=1}^k \nu_j\Omega_j + a_{k,n} + (1-\nu_k)\Omega_k \ge a_{k+1,n-1}.
    \]
    Therefore $T^{(k+1,n-1)}$ is a well-defined tableau. By the description of the action of $f_{n-1}$ in the beginning of this section, we obtain that
    \[
    T^{(k+1,n-1)}_{s,t}=\begin{cases}
        T^{(k)}_{s,t} + \nu_{k+1}\Omega_{k+1} & \text{if } (s,t) = (n-1,1),\\
        T^{(k)}_{s,t} - \nu_{k+1}\Omega_{k+1} & \text{if } (s,t) = (n-1,2),\\
        T^{(k)}_{s+1,t} + (1-\nu_{k+1})\Omega_{k+1} & \text{if } (s,t) = (n-2,2),\\
        T^{(k)}_{s,t} + a_{k+1,n-1} + \nu_{k+1}\Omega_{k+1} & \text{if } (s,t) = (n-2,3),\\
        T^{(k)}_{s,t} & \text{else}.
    \end{cases}
    \]
    Note that the number of unpaired $(n-2)$ in the $(n-3)$-th and $(n-2)$-th row is $$a_{k,n}+(1-\nu_k)\Omega_k-(1-\nu_{k+1})\Omega_{k+1}=a_{k+1,n-1}+\nu_{k+1}\Omega_{k+1}\geq 0 (\text{ by } Eq.\eqref{defOmega})$$
    and $$\sum_{j=k+1}^{n-1} a_{j,n}-\nu_{k+1}\Omega_{k+1}(\geq a_{k+1,n}-\nu_{k+1}\Omega_{k+1})$$ respectively. Thus $f_{n-2}^{a_{k+1,n-1}+a_{k+1,n}}$ acts on row $(n-3)$ and $(n-2)$-th row $a_{k+1,n-1}+\nu_{k+1}\Omega_{k+1}$ and $a_{k+1,n}-\nu_{k+1}\Omega_{k+1}$ may times respectively. Therefore, the tableau obtained $T^{(k+1,n-2)} = f_{n-2}^{a_{k+1,n-1}+a_{k+1,n}}T^{(k+1,n-1)}$ is given by 
    \[
    T^{(k+1,n-1)}_{s,t}=\begin{cases}
        T^{(k+1,n-1)}_{s,t} - a_{k+1,n} + \nu_{k+1}\Omega_{k+1} & \text{if } (s,t) = (n-2,1),\\
        T^{(k+1,n-1)}_{s,t} + a_{k+1,n} - \nu_{k+1}\Omega_{k+1} & \text{if } (s,t) = (n-2,2),\\
        T^{(k+1,n-1)}_{s+1,t} & \text{if } (s,t) = (n-3,2),\\
        T^{(k+1,n-1)}_{s,t} + a_{k+1,n-1} + \nu_{k+1}\Omega_{k+1} & \text{if } (s,t) = (n-3,3),\\
        T^{(k+1,n-1)}_{s,t} & \text{else}.
    \end{cases}
    \]    
    Note that $(n-3)$-th and $(n-4)$-th row of the tableau $T^{(k+1,n-2)}$  are precisely the $(n-2)$-th and $(n-3)$-th rows of $T^{(k+1,n-1)}$ respectively. Moreover, the total number of unpaired $(n-3)$ in $T^{(k+1,n-2)}$ is equal to $a_{k+1,n-1}+a_{k+1,n}.$ Therefore, a similar argument shows that the tableau $T^{(k+1)}_{s,t} = \prod_{j=k+1}^{n-2}f_j^{a_{k+1,n-1}+f_{k+1,n}}f_{n-1}^{a_{k+1,n-1}}(T^{(k})$ is given by 
    \[
    T^{(k+1)}_{s,t}=\begin{cases}
        T^{(k)}_{s,t} - a_{k+1,n} + \nu_{k+1}\Omega_{k+1} & \text{if } t=1 \text{ and } k+1\le s\le n-2,\\
        T^{(k)}_{s,t} + \nu_{k+1}\Omega_{k+1} & \text{if } (s,t) = (n-1,1),\\
        T^{(k)}_{s,t} - a_{k+1,n-1} - \nu_{k+1}\Omega_{k+1} & \text{if } (s,t) = (k,2),\\
        T^{(k)}_{s,t} - a_{k+1,n-1} + a_{k+1,n} - 2\nu_{k+1}\Omega_{k+1} & \text{if } t=2 \text{ and } k+1\le s\le n-2,\\
        T^{(k)}_{s,t} - \nu_{k+1}\Omega_{k+1} & \text{if } (s,t) = (n-1,2),\\
        T^{(k)}_{s,t} + a_{k+1,n-1} + \nu_{k+1}\Omega_{k+1} & \text{if } t=3 \text{ and } k\le s\le n-2,\\
        T^{(k)}_{s,t} & \text{else}.
    \end{cases}
    \]
    Now the proof is completed by induction.
\end{proof}
\section*{Declaration}

\noindent\textbf{Acknowledgment:} The authors thank Prof. Sankaran Viswanath and Prof. Deniz Kus for useful discussions. The first and the second author acknowledge IIT Madras and IMSc respectively for their financial support and exceptional facilities where the research has been carried out.\vspace{0,2cm} 

\noindent \textbf{Ethical Approval:} Not applicable.\vspace{0,2cm}

\noindent\textbf{Data Availability:} No datasets were generated or analysed during the current study.\vspace{0,2cm}

\noindent\textbf{Competing interest:} The authors declare no competing interests. \vspace{0,2cm}

\noindent\textbf{Contributions:} All authors equally contributed.

\bibliographystyle{plain}
\bibliography{bibliography}
	
\end{document}